\theoremstyle{plain}
\newtheorem{theorem}{Theorem}[section]
\newtheorem{lemma}[theorem]{Lemma}
\newtheorem{corollary}[theorem]{Corollary}
\newtheorem{question}[theorem]{Question}
\theoremstyle{definition}
\newcommand{\N}{\operatorname{N}}
\theoremstyle{remark}
\begin{document}

\title[The Powell Conjecture]{The Powell Conjecture for the genus-three Heegaard splitting of the $3$-sphere}

\author[S. Cho]{Sangbum Cho}
\thanks{The first-named author is supported by the National Research Foundation of Korea(NRF) grant funded by the Korea government(MSIT) (NRF-2021R1F1A1060603 and RS-2024-00456645).}
\address{Department of Mathematics Education, Hanyang University, Seoul 04763, Korea, and School of Mathematics, Korea Institute for Advanced Study, Seoul 02455, Korea}
\email{scho@hanyang.ac.kr}

\author[Y. Koda]{Yuya Koda}
\thanks{The second-named author is supported by JSPS KAKENHI Grant Numbers JP20K03588, JP21H00978, and JP23H05437.}
\address{Department of Mathematics, Hiyoshi Campus, Keio University, Yokohama 223-8521, Japan, and
International Institute for Sustainability with Knotted Chiral Meta Matter (WPI-SKCM$^2$), Hiroshima University, Higashi-Hiroshima 739-8526, Japan}
\email{koda@keio.jp}

\author[J. H. Lee]{Jung Hoon Lee}
\thanks{The third-named author is supported by the National Research Foundation of Korea(NRF) grant funded by the Korea government(MSIT) (RS-2023-00275419), and supported by Global - Learning \& Academic research institution for Master’s\,$\cdot$\,PhD students, and Postdocs(LAMP) Program of the National Research Foundation of Korea(NRF) grant funded by the Ministry of Education (No. RS-2024-00443714).}
\address{Department of Mathematics and Institute of Pure and Applied Mathematics, Jeonbuk National University, Jeonju 54896, Korea}
\email{junghoon@jbnu.ac.kr}

\date{\today}

\begin{abstract}
The Powell Conjecture states that the Goeritz group of the Heegaard splitting of the $3$-sphere is finitely generated; furthermore, four specific elements suffice to generate the group. Zupan demonstrated that the conjecture holds if and only if the reducing sphere complexes are all connected. In this work, we establish the connectivity of the reducing sphere complex for the genus-$3$ case, thereby confirming the Powell Conjecture in genus $3$. Additionally, we propose a potential framework for extending this approach to Heegaard splittings of higher genera.
\end{abstract}

\maketitle

\begin{small}
\hspace{2em}
\textbf{2020 Mathematics Subject Classification}:
57K30, 57K20; 20F05

\hspace{2em}
\textbf{Keywords}:
Goeritz group, Heegaard splitting, the Powell Conjecture,

\hspace{7.8em} reducing sphere, weak reducing disk
\end{small}

\section{Introduction}\label{sec:introduction}

Let $(V, W; \Sigma)$ be a \textit{Heegaard splitting} of a closed orientable $3$-manifold $M$.
Specifically, $\Sigma$ is a closed orientable surface in $M$ that decomposes $M$ into two handlebodies $V$ and $W$.
The surface $\Sigma$ is called the \textit{Heegaard surface}, and the genus of $\Sigma$ is referred to as the \textit{genus} of the splitting.
The \textit{Goeritz group} of the splitting is defined as the automorphism group of the splitting, that is, the group of isotopy classes of orientation-preserving self-homeomorphisms of $M$ that preserve $V$ and $W$ setwise.

Due to a result by Waldhausen \cite{Wal68}, the Heegaard splitting of the $3$-sphere $S^3$ is uniquely determined by its genus up to isotopy.
Thus, we denote the Goeritz group of the genus-$g$ Heegaard splitting of $S^3$ simply by $\mathcal{G}_g$.
It is straightforward to see that $\mathcal{G}_0 = 1$ and $\mathcal{G}_1 = \mathbb{Z} / 2 \mathbb{Z}$; however, the difficulty level increases dramatically when the genus exceeds $1$.
In 1933, Goeritz \cite{Goe33} provided a finite generating set for $\mathcal{G}_2$, which is one reason why the automorphism groups of Heegaard splittings are referred to as Goeritz groups today.
However, there was little progress in studying the Goeritz groups $\mathcal{G}_g$ for $g \geq 3$ during the half-century following Goeritz's work.

In 1980, Powell published a paper \cite{Pow80} claiming to have found a finite generating set for $\mathcal{G}_g$ when $g \geq 3$.
However, in 2004, Scharlemann \cite{Sch04} pointed out a critical gap in Powell's proof.
Despite this, the generating set proposed by Powell remains plausible, and the conjecture that this set generates $\mathcal{G}_g$ is now known as the \textit{Powell Conjecture}.
It is worth noting that Scharlemann \cite{Scharlemann} demonstrated that one of the five elements in Powell's generating set is redundant.
Hence, the Powell Conjecture asserts that the remaining four elements, as shown in Figure \ref{Fig1}, generate $\mathcal{G}_g$.
In the figure, the four elements are described as isotopies of $S^3$ from the identity to a homeomorphism that preserves each handlebody of the splitting setwise.

\begin{figure}[htbp]
\begin{center}
\labellist
\pinlabel {$\varphi_\omega$} [Bl] at 22 -3
\pinlabel {$\varphi_{\eta}$} [Bl] at 172 -3
\pinlabel {$\varphi_{\eta_{1,2}}$} [Bl] at 300 -3
\pinlabel {$\varphi_\theta$} [Bl] at 438 -3
\endlabellist
\includegraphics[width=15.5cm,clip]{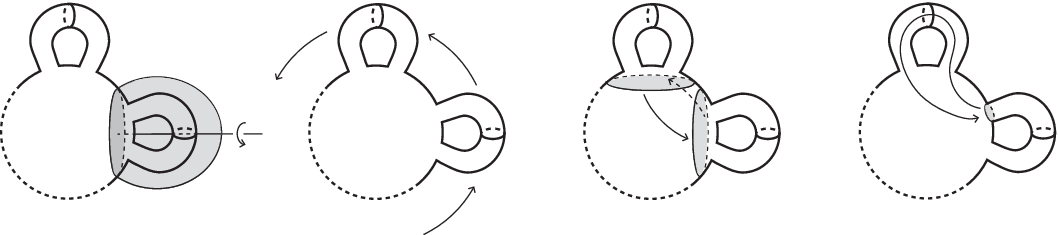}
\vspace{0.2cm}
\captionof{figure}{The four generators of $\mathcal{G}_g$ proposed in the Powell Conjecture.}
\label{Fig1}
\end{center}
\end{figure}

Since the pioneering work of Scharlemann \cite{Sch04}, the study of Goeritz groups has evolved significantly, drawing techniques from diverse areas such as singularity theory, dynamical systems, and geometric group theory.
For a comprehensive overview, we refer the reader to the introduction of \cite{CK19}.
Despite various advances in understanding Goeritz groups, the Powell Conjecture remains one of the central open problems in this field.

A key approach to tackling the conjecture is to identify a nice simplicial complex on which the Goeritz group acts naturally.
For instance, Scharlemann \cite{Sch04} provided a modern proof of Goeritz's classical result on $\mathcal{G}_2$ using the \textit{reducing sphere complex}.
This work was extended by Akbas, who derived a finite presentation of $\mathcal{G}_2$ in \cite{Akb}.
Shortly thereafter, the first-named author independently obtained the same presentation of $\mathcal{G}_2$ in \cite{Cho08} using a different complex, known as the \textit{primitive disk complex}.
However, many of the techniques developed for the genus-$2$ case have proven not be applied directly to the case of higher genera.
For example, in Heegaard splittings of genus greater than $2$, the standard disk surgery techniques, which are crucial for proving the connectivity (and contractibility) of the primitive disk complex for the genus-$2$ splitting, are no longer applicable (see \cite{CKL20}).

Recently, Freedman and Scharlemann \cite{Freedman-Scharlemann} presented a proof of the Powell Conjecture for $\mathcal{G}_3$ by employing a technique from singularity theory called \textit{graphics}.
On the other hand, Zupan \cite{Zupan} showed that if the reducing sphere complexes are connected for all relevant genera, then the Powell Conjecture holds.

Here, we review the definition of the reducing sphere complex, as it is a central object of study in this paper.
Let $(V, W; \Sigma)$ be a genus-$g$ Heegaard splitting of $S^3$ with $g \geq 3$.
A $2$-sphere $P$ in $S^3$ is called a \textit{reducing sphere} for $\Sigma$ if $P \cap \Sigma$ is a single circle that is essential in $\Sigma$.
If $P$ is a reducing sphere for $\Sigma$, then $P \cap V$ and $P \cap W$ are essential separating disks in $V$ and $W$, respectively.
The \textit{reducing sphere complex} $\mathcal{R}(\Sigma)$ is a simplicial complex defined as follows:
the vertices of $\mathcal{R}(\Sigma)$ are the isotopy classes of reducing spheres for $\Sigma$, and $k+1$ distinct vertices span a $k$-simplex if there exist representative spheres of the vertices that are pairwise disjoint.

Zupan's result \cite{Zupan} precisely states that for any integer $g \geq 3$, the Powell Conjecture for $\mathcal{G}_k$ holds for all $3 \leq k \leq g$ if and only if the reducing sphere complex $\mathcal{R}(\Sigma_k)$ is connected for all $3 \leq k \leq g$, where $\Sigma_k$ is the genus-$k$ Heegaard surface of $S^3$.

\bigskip

In this paper, we provide a short proof of the connectivity of the reducing sphere complex for the genus-$3$ Heegaard splitting of $S^3$, which immediately gives an alternative proof of the Powell Conjecture for $\mathcal{G}_3$.
Our argument relies on results from the theory of \textit{topologically minimal surfaces}, developed by Bachman \cite{Bachman1, Bachman2, Bachman3}.
The essential property we use is that the genus-$3$ Heegaard surface for $S^3$ is not \textit{critical}, meaning that the disk complex for the genus-$3$ Heegaard surface is simply connected.
Indeed, it has been claimed in \cite{Appel} and in \cite{Campisi-Torres} that the genus-$g$ Heegaard surface $\Sigma$ in $S^3$ with $g \geq 2$ is topologically minimal with index $2g - 1$.
This implies that $\pi_i(\mathcal{D}(\Sigma))$ is trivial for $i \leq 2g - 3$, leading directly to the conclusion that $\mathcal{D}(\Sigma)$ is simply connected.
While the arguments in \cite{Campisi-Torres} raise some questions regarding their completeness, and \cite{Appel} has not been published in a refereed journal, our proof does not rely on these works in their entirety.
Instead, for our purposes, we only require the simple connectivity of $\mathcal{D}(\Sigma)$.
To address this, we provide an independent proof based primarily on Theorem 3.2.B in \cite{Iva02} and the arguments in Section 1 of \cite{Iva87}, as detailed in Appendix~\ref{sec: Simple connectivity of the disk complexes of the standard Heegaard surfaces of the 3-sphere}.

We note that the reducing sphere complex for the genus-$2$ splitting has a slightly different definition, since any two non-isotopic reducing spheres must intersect in this case.
In \cite{Cho08}, its combinatorial structure is well-understood; in fact, it is a $2$-dimensional complex that deformation retracts to a tree.
This paper does not address the genus-$2$ splitting.

The main result of this work is stated as follows.

\begin{theorem}\label{main_theorem}
For the genus-$3$ Heegaard splitting of the $3$-sphere, the reducing sphere complex $\mathcal{R}(\Sigma)$ is connected.
\end{theorem}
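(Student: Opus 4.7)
The plan is to deduce the connectivity of $\mathcal{R}(\Sigma)$ from the simple connectivity of the full disk complex $\mathcal{D}(\Sigma)$ established in the Appendix. Given two reducing spheres $P_0, P_1$, each determines an essential separating disk $D_j = P_j \cap V$, and these are vertices in $\mathcal{D}(\Sigma)$. The connectivity of $\mathcal{D}(\Sigma)$ yields a sequence $D_0 = X_0, X_1, \ldots, X_n = D_1$ of essential disks in $V \cup W$ with consecutive terms disjoint. The problem then becomes to upgrade each $X_i$ to a reducing sphere $Q_i$ (with $Q_0 = P_0$ and $Q_n = P_1$) in such a way that $Q_i$ and $Q_{i+1}$ lie in the same connected component of $\mathcal{R}(\Sigma)$.

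The technical core consists of two subsidiary lemmas. The first, an \emph{existence} statement, asserts that every essential disk $X$ in $V$ or $W$ is disjoint from some reducing sphere. For the genus-$3$ splitting this should follow by direct construction: the complement of $\partial X$ in $\Sigma$ has enough residual topology to support a separating curve that bounds disks on both sides of $\Sigma$, yielding a reducing sphere disjoint from $X$. The second lemma, a \emph{local connectivity} statement, asserts that if $X, Y$ are disjoint essential disks and $Q_X, Q_Y$ are reducing spheres disjoint from $X$ and $Y$ respectively, then $Q_X$ and $Q_Y$ are connected in $\mathcal{R}(\Sigma)$. This should follow by analyzing the subsurface $\Sigma \setminus (X \cup Y)$ and constructing suitable intermediate reducing spheres disjoint from $X \cup Y$; the low genus keeps the case analysis finite and manageable.

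The main obstacle will be the second lemma, and in particular the point at which the \emph{simple} connectivity of $\mathcal{D}(\Sigma)$ (not merely its connectivity) is invoked. The selection in the first lemma is far from canonical, and a priori different lifts of the same $\mathcal{D}(\Sigma)$-path could terminate in different components of $\mathcal{R}(\Sigma)$. The role of $\pi_1(\mathcal{D}(\Sigma)) = 0$ is to guarantee that any such discrepancy loop in $\mathcal{D}(\Sigma)$ bounds a filling, and this filling can be pulled back to exhibit a path in $\mathcal{R}(\Sigma)$ interpolating between competing liftings. A clean way to implement this is to introduce an intermediate complex built from weak reducing pairs (in the spirit of the \emph{weak reducing disks} highlighted in the paper's keyword list), which interfaces between $\mathcal{D}(\Sigma)$ and $\mathcal{R}(\Sigma)$ and through which the $\pi_1$ information of $\mathcal{D}(\Sigma)$ transfers to $\pi_0$ information of $\mathcal{R}(\Sigma)$ by a nerve-type argument.
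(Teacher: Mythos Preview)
Your proposal has a genuine gap at the second subsidiary lemma. As you state it---if $X, Y$ are disjoint essential disks and $Q_X, Q_Y$ are \emph{any} reducing spheres disjoint from $X$ and $Y$ respectively, then $Q_X$ and $Q_Y$ lie in the same component of $\mathcal{R}(\Sigma)$---this is essentially as hard as the theorem itself. Taking $X = Y$ to be any fixed disk, the lemma would assert that all reducing spheres disjoint from $X$ are mutually connected in $\mathcal{R}(\Sigma)$; but a priori these form a large, uncontrolled family, and connecting two of them is no easier than connecting two arbitrary reducing spheres. You acknowledge this (``the selection in the first lemma is far from canonical''), but your proposed remedy---filling discrepancy loops in $\mathcal{D}(\Sigma)$ using $\pi_1(\mathcal{D}(\Sigma)) = 1$ and a nerve-type argument---is not made precise, and it is not clear how it could be: the assignment $X \mapsto Q_X$ is neither a simplicial map nor anything resembling a fibration over $\mathcal{D}(\Sigma)$, so a $2$-disk filling in $\mathcal{D}(\Sigma)$ does not obviously produce a path in $\mathcal{R}(\Sigma)$. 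Your first lemma (every essential disk is disjoint from some reducing sphere) is also far from a ``direct construction'': a curve in $\Sigma \setminus \partial X$ must bound disks on \emph{both} sides, and for a non-primitive disk $X$ this is not evident.

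The paper exploits $\pi_1(\mathcal{D}(\Sigma)) = 1$ quite differently. By Bachman, simple connectivity of $\mathcal{D}(\Sigma)$ means $\Sigma$ is not \emph{critical}, and his Lemma~8.5 then guarantees that any two weak reducing disks are joined by a \emph{weak reducing sequence}---a path alternating between $V$ and $W$, which is strictly more than an arbitrary path in $\mathcal{D}(\Sigma)$. In genus $3$ this sequence is refined to consist of non-separating disks (Lemma~\ref{lem0.3}), and then the decisive genus-$3$ observation (Lemma~\ref{lem1.1}) shows that in any triple $D - E - D'$ of non-separating disks with $\partial D \not\simeq \partial D'$, the middle disk $E$ is automatically \emph{primitive}. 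Primitive disks come with dual disks and hence with canonical associated reducing spheres; Lemmas~\ref{lem0.2} and~\ref{lem0.7} (the former relying on Zupan's result for reducing spheres meeting in at most six points) then chain these reducing spheres together. What your outline is missing is precisely this primitive-disk mechanism: it supplies the canonical lift from disks to reducing spheres whose absence you identified as the main obstacle, and it is where the genus-$3$ hypothesis is actually used.
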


The following is a direct consequence of Theorem \ref{main_theorem} together with Zupan \cite{Zupan}.

\begin{corollary}\label{Powell_conjecture}
The Powell Conjecture is true in the case of genus $3$.
\end{corollary}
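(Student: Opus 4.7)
The plan is to deduce this corollary directly from Theorem~\ref{main_theorem} by invoking Zupan's equivalence \cite{Zupan} quoted in the introduction. Applied with $g = 3$, Zupan's biconditional specializes to the single assertion: the Powell Conjecture for $\mathcal{G}_3$ holds if and only if the reducing sphere complex $\mathcal{R}(\Sigma_3)$ is connected, where $\Sigma_3$ is the genus-$3$ Heegaard surface of $S^3$. Since Theorem~\ref{main_theorem} furnishes precisely the right-hand side of this equivalence, the corollary follows at once.

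Concretely, the argument proceeds in two steps. First I would state Zupan's theorem at the level $g = 3$, noting that the quantification over $3 \leq k \leq g$ collapses to the single genus $k = 3$, so that no information about higher-genus reducing sphere complexes is required. Second I would apply the forward implication of this equivalence, whose hypothesis is supplied verbatim by Theorem~\ref{main_theorem}. The conclusion is that $\mathcal{G}_3$ is generated by the four isotopy classes $\varphi_\omega$, $\varphi_\eta$, $\varphi_{\eta_{1,2}}$, $\varphi_\theta$ depicted in Figure~\ref{Fig1}, which is the content of the Powell Conjecture in genus $3$.

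There is no genuine obstacle at this step: all of the topological difficulty has been absorbed into the proof of Theorem~\ref{main_theorem}, and Zupan's theorem is quoted as a black box. The only care required is a purely bookkeeping one, namely confirming that the formulation of the Powell Conjecture used in \cite{Zupan} matches the one adopted here (four generators, after Scharlemann's \cite{Scharlemann} removal of the redundant fifth Powell generator). Once this identification is made, the deduction is immediate, and in particular it invokes none of the potential framework suggested for higher-genus extensions.
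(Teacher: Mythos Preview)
Your proposal is correct and matches the paper's own treatment: the corollary is stated there as ``a direct consequence of Theorem~\ref{main_theorem} together with Zupan~\cite{Zupan},'' with no further proof given. Your observation that the quantifier $3 \leq k \leq g$ collapses at $g = 3$ is exactly the point, and nothing more is needed.
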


\section{Primitive disks and weak reducing disks}\label{sec:primitive_disks}

Throughout the paper, $\N(X)$ and $\partial X$  will denote a regular neighborhood and the boundary of $X$, respectively, for a subspace $X$ of a space, where the ambient space will always be clear from the context.

\vspace{1em}

Let $(V, W; \Sigma)$ be a genus-$g$ Heegaard splitting of $S^3$ with $g \ge 3$.
An essential disk $D$ in $V$ is called a {\em primitive disk} if there is an essential disk $E$ in $W$ such that $\partial D$ intersects $\partial E$ transversely in a single point.
Such a disk $E$ is called a {\em dual disk} of $D$.
The disk $E$ is also a primitive disk in $W$ with a dual disk $D$ in $V$.
We note that any primitive disk is necessarily non-separating.
We call the pair $(D, E)$ of a primitive disk $D$ with its dual disk $E$ simply a {\em dual pair}.
Two dual pairs $(D, E)$ and $(D', E')$ are said to be {\em separated} if $D \cup E$ and  $D' \cup E'$ are disjoint.
Two dual pairs $(D, E)$ and $(D', E')$ are said to be {\em p-connected} if there exists a sequence of dual pairs $(D, E) = (D_1, E_1), (D_2, E_2), \ldots, (D_n, E_n) = (D', E')$ such that $(D_i, E_i)$ and $(D_{i+1}, E_{i+1})$ are separated for each $i \in \{ 1, 2, \ldots, n-1 \}$.
A reducing sphere $P$ is said to be {\em associated} with a dual pair $(D, E)$ if $P = \partial \N(D \cup E)$.

\begin{lemma}\label{lem0.1}
Let $(D, E)$ and $(D', E')$ be dual pairs.
Let $P$ and $P'$ be reducing spheres disjoint from $D \cup E$ and $D' \cup E'$, respectively.
Then $(D, E)$ and $(D', E')$ are p-connected if and only if the vertices of $\mathcal{R}(\Sigma)$ represented by $P$ and $P'$ are in the same component of $\mathcal{R}(\Sigma)$.
\end{lemma}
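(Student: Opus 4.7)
The plan is to exploit the operation sending a dual pair $(D, E)$ to its associated reducing sphere $\partial \N(D \cup E)$ as a bridge between the combinatorics of dual pairs and that of the complex $\mathcal{R}(\Sigma)$.

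The forward implication is direct. Given a p-connecting sequence $(D, E) = (D_1, E_1), \dots, (D_n, E_n) = (D', E')$, I set $Q_i = \partial \N(D_i \cup E_i)$. Because consecutive pairs are separated, one can choose the neighborhoods small enough that $Q_i \cap Q_{i+1} = \emptyset$, so $[Q_i]$ and $[Q_{i+1}]$ are either equal or adjacent in $\mathcal{R}(\Sigma)$. Since $P$ is disjoint from $D \cup E$, shrinking the neighborhood makes $P$ disjoint from $Q_1$, and likewise $P'$ from $Q_n$. Concatenating yields a path in $\mathcal{R}(\Sigma)$ from $[P]$ to $[P']$.

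For the backward implication, given a path $P = P_0, P_1, \dots, P_m = P'$ in $\mathcal{R}(\Sigma)$, I would induct on $m$ using two auxiliary facts. \textbf{Fact (A)}: for any two disjoint reducing spheres $R, R'$, there is a dual pair $(F, G)$ with $F \cup G$ disjoint from $R \cup R'$. \textbf{Fact (B)}: for any reducing sphere $R$, any two dual pairs each disjoint from $R$ are p-connected. Granting these, the inductive step applies (A) to $P_0$ and $P_1$ to find a dual pair $(\tilde F, \tilde G)$ disjoint from $P_0 \cup P_1$, then (B) at $P_0 = P$ to p-connect $(D, E)$ with $(\tilde F, \tilde G)$, and finally the inductive hypothesis to the shorter path $P_1, \dots, P_m = P'$ with $(\tilde F, \tilde G)$ in the role of $(D, E)$. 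The base case $m = 0$ is a single application of (B).

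Fact (B) is cheap: each side of $R$ is a $3$-ball whose induced sub-Heegaard splitting has genus at least $1$, and in any such genus-$\geq 1$ sub-splitting one finds a primitive pair; a non-separating disk of the sub-handlebody remains non-separating in $V$, so this primitive pair is a dual pair of the ambient splitting. Any such dual pair on the opposite side of $R$ from $(F_1, G_1), (F_2, G_2)$ is automatically separated from each, giving a p-connection in the same-side case, while the opposite-side case is immediate. Fact (A) is the main obstacle. The disjoint spheres $R, R'$ cut $\Sigma$ into three subsurfaces $X_1, X_2, X_3$, with $X_1, X_3$ each carrying one boundary circle and $X_2$ two. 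Additivity of genus under gluing along a circle, combined with the reducing conditions on both $R$ and $R'$, forces $g(X_1) \geq 1$ and $g(X_3) \geq 1$ (in genus $3$, the triple $(g_1, g_2, g_3)$ must lie in $\{(1,0,2), (1,1,1), (2,0,1)\}$). A primitive pair in the genus-$g(X_1)$ sub-splitting on the $X_1$-side of $R$ then supplies the required dual pair of the ambient splitting, lying entirely in the ball on that side of $R$ and thus disjoint from $R \cup R'$.
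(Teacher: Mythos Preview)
Your argument is correct and follows essentially the same route as the paper's proof: both directions hinge on the observation that each side of a reducing sphere carries a genus-$\geq 1$ sub-splitting, hence a dual pair, so that dual pairs can be threaded along a path of disjoint reducing spheres. The paper builds the p-connecting sequence step by step via a case analysis on the relative position of $P_{i+1}$ and $(D_i, E_i)$ with respect to $P_i$, whereas you abstract this into your Facts~(A) and~(B) and run an induction on the path length; the underlying geometric content is the same.
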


\begin{proof}
Suppose that the two dual pairs $(D, E)$ and $(D', E')$ are $p$-connected by a sequence of dual pairs $(D, E) = (D_1, E_1), (D_2, E_2), \ldots, (D_n, E_n) = (D', E')$.
Then we can choose a reducing sphere $P_i$ associated with the dual pair $(D_i, E_i)$ such that $P_i$ is disjoint from $P_{i+1}$ for each $i \in \{ 1, 2, \ldots, n-1 \}$.
Since $P$ is disjoint from $D \cup E$, we see that $P$ is disjoint from or isotopic to $P_1$, and similarly $P'$ is disjoint from or isotopic to $P_n$.
Thus we have a path in $\mathcal{R}(\Sigma)$ joining the vertices represented by $P$ and $P'$.

\vspace{0.2cm}

Conversely, suppose that we have a sequence $P = P_1, P_2, \ldots, P_n = P'$ of reducing spheres such that $P_i$ is disjoint from $P_{i+1}$ for each $i \in \{ 1, 2, \ldots, n-1 \}$.
The reducing sphere $P = P_1$ separates $S^3 = V \cup_{\Sigma} W$ into two $3$-balls $B_1 = V_1 \cup_{\Sigma_1} W_1$ and $B'_1 = V'_1 \cup_{\Sigma'_1} W'_1$, where $V_1 = V \cap B_1$, $W_1 = W \cap B_1$, $\Sigma_1 = \Sigma \cap B_1$, and $V'_1 = V \cap B'_1$, $W'_1 = W \cap B'_1$, $\Sigma'_1 = \Sigma \cap B'_1$.
Without loss of generality, assume that $D = D_1$ and $E = E_1$ are contained in $B_1 = V_1 \cup_{\Sigma_1} W_1$.

\vspace{0.2cm}

\noindent Case $1$. $P_2$ is contained in $V_1 \cup_{\Sigma_1} W_1$.

We can find a dual pair $(D_2, E_2)$ such that $D_2 \cup E_2$ is contained in $V'_1 \cup_{\Sigma'_1} W'_1$.
Then $D_2 \cup E_2$ is disjoint from $P_2$, and $(D_1, E_1)$ and $(D_2, E_2)$ are separated.

\vspace{0.2cm}

\noindent Case $2$. $P_2$ is contained in $V'_1 \cup_{\Sigma'_1} W'_1$.

As in the case of $P_1$, the reducing sphere $P_2$ also separates $V \cup_{\Sigma} W$ into two $3$-balls, say $V_2 \cup_{\Sigma_2} W_2$ and $V'_2 \cup_{\Sigma'_2} W'_2$, and without loss of generality, assume that $P_1$ is contained in $V_2 \cup_{\Sigma_2} W_2$.
We can take a dual pair $(D_2, E_2)$ such that $D_2 \cup E_2$ is contained in $V'_2 \cup_{\Sigma'_2} W'_2$, and is disjoint from $P_2$.
The dual pairs $(D_1, E_1)$ and $(D_2, E_2)$ are then separated.

\vspace{0.2cm}

Inductively, we take a dual pair $(D_i, E_i)$ such that $D_i \cup E_i$ is disjoint from $P_i$, and $(D_{i-1}, E_{i-1})$ and $(D_i, E_i)$ are separated for each $i \in \{ 2, \ldots, n \}$.
As in the cases of $P_1$ and $P_2$, the reducing sphere $P_n = P'$ separates $V \cup_{\Sigma} W$ into two $3$-balls, say $V_n \cup_{\Sigma_n} W_n$ and $V'_n \cup_{\Sigma'_n} W'_n$.
If $D_n \cup E_n$ and $D' \cup E'$ are in the same side of $P_n$, say in $V_n \cup_{\Sigma_n} W_n$, then take a dual pair $(D_{n+1}, E_{n+1})$ such that $D_{n+1} \cup E_{n+1}$ lies in $V'_n \cup_{\Sigma'_n} W'_n$.
Then $(D_n, E_n)$ and $(D', E')$ are $p$-connected via $(D_{n+1}, E_{n+1})$.
If $D_n \cup E_n$ and $D' \cup E'$ are in the opposite sides of $P_n$, then $(D_n, E_n)$ and $(D', E')$ are already separated.
We conclude that $(D, E)$ and $(D', E')$ are $p$-connected.
\end{proof}

\begin{lemma}\label{lem0.2}
Let $E$ be a common dual disk of two primitive disks $D$ and $D'$.
Then the dual pairs $(D, E)$ and $(D', E)$ are p-connected.
\end{lemma}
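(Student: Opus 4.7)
Plan:

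The plan is to exhibit an intermediate dual pair $(\widetilde{D}, \widetilde{E})$ with $\widetilde{D} \cup \widetilde{E}$ disjoint from $D \cup D' \cup E$; the sequence $(D, E), (\widetilde{D}, \widetilde{E}), (D', E)$ then forms a $p$-chain of length two and proves the lemma.

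Let $P := \partial \N(D \cup E)$ be the reducing sphere associated with $(D, E)$, and let $B$ be the closed $3$-ball bounded by $P$ not containing $D \cup E$. The splitting restricted to $B$ is a genus-$(g-1)$ Heegaard splitting of the ball, and since $g - 1 \geq 2$, it admits an abundance of dual pairs, each automatically separated from $(D, E)$. The task thus reduces to choosing such a dual pair in $B$ that is also separated from $(D', E)$, i.e., with $\widetilde{D} \cup \widetilde{E}$ disjoint from $D'$.

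I would first isotope $D'$ in $V$ to minimize $|D' \cap P|$; standard innermost-disk surgeries in $V$ eliminate all circle components of $D' \cap P$, leaving only arcs with endpoints on $c := P \cap \Sigma$. If this intersection is empty, then $D'$ lies on one side of $P$. Because $|\partial D' \cap \partial E| = 1$ and $\partial E$ is contained in the once-punctured torus $T := \Sigma \setminus (\Sigma \cap B)$, the disk $D'$ must lie on the $T$-side of $P$. Then $\partial D$ and $\partial D'$ are disjoint essential simple closed curves on the once-punctured torus $T$, hence isotopic on $T$; therefore $D' \simeq D$ in $V$ and the conclusion is trivial.

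Otherwise, $D' \cap B$ is a nonempty finite collection of sub-disks of $D'$ properly embedded in the genus-$(g-1) \geq 2$ handlebody $V \cap B$. I would construct $(\widetilde{D}, \widetilde{E})$ inside $B$ with $\widetilde{D}$ disjoint from these sub-disks: cutting $V \cap B$ along $D' \cap B$ leaves components of positive genus (since one starts with genus at least two), and a non-separating disk in such a component serves as $\widetilde{D}$; a matching dual $\widetilde{E} \subset W \cap B$ is then sought to remain disjoint from the arcs $\partial D' \cap (\Sigma \cap B)$. The main obstacle lies precisely here: ensuring the existence of $\widetilde{E}$ meeting $\partial \widetilde{D}$ in a single point on $\Sigma \cap B$ while avoiding the arcs cut out by $D'$ on $\Sigma \cap B$. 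I expect to handle this by inductively reducing $|D' \cap P|$ via an outermost-arc surgery on $D'$ along an outermost subdisk of $P \cap V$, producing a new primitive disk $D''$ with the same dual $E$ and strictly smaller $|D'' \cap P|$, ultimately reducing to the disjoint subcase in which the required dual pair can be produced by direct geometric construction inside $B$.
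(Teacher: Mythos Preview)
Your proposal has a genuine gap at the central step. The observation that $D'\cap P=\emptyset$ forces $\partial D'\subset T$ and hence $D'\simeq D$ is correct, but this trivial base case is not what your induction on $|D'\cap P|$ actually reaches. The surgery you describe replaces $D'$ by a new disk $D''$ (still dual to $E$, disjoint from $D'$, with $|D''\cap P|<|D'\cap P|$). To conclude anything about the original pair $(D',E)$ you must then show that $(D',E)$ and $(D'',E)$ are $p$-connected; since $D'$ and $D''$ are disjoint and in general non-isotopic, this is precisely the case you have not handled. Iterating the surgery therefore only yields a chain $D'=D^{(0)},D^{(1)},\dots,D^{(n)}\simeq D$ of primitive disks with common dual $E$ and consecutive terms disjoint --- exactly the reduction the paper also carries out (via surgery on $D\cap D'$ rather than on $D'\cap(P\cap V)$, but to the same effect). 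The substance of the lemma lies in that disjoint step, and your direct search for a separating pair $(\widetilde D,\widetilde E)$ inside $B$ avoiding the subdisks $D'\cap B$ is left as an acknowledged ``main obstacle''; the induction is circular and does not resolve it.

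The paper closes this gap by an external input you do not invoke: Zupan's result \cite[Theorem~1.3]{Zupan} that two reducing spheres meeting in at most six points on $\Sigma$ lie in the same component of $\mathcal R(\Sigma)$. For disjoint $D,D'$ with common dual $E$, the associated reducing spheres $P=\partial\N(D\cup E)$ and $P'=\partial\N(D'\cup E)$ satisfy $|(P\cap\Sigma)\cap(P'\cap\Sigma)|=4$, so Zupan applies, and Lemma~\ref{lem0.1} converts this back into $p$-connectedness of $(D,E)$ and $(D',E)$. Without either invoking Zupan or actually completing the construction of $(\widetilde D,\widetilde E)$ in $B$ disjoint from $D'$ (which is not straightforward: cutting the genus-$(g-1)$ handlebody $V\cap B$ along the many subdisks $D'\cap B$ can leave only genus-zero pieces, and the dual $\widetilde E$ must also avoid the arcs $\partial D'\cap(\Sigma\cap B)$), the argument does not close.
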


\begin{proof}
First suppose that $D$ is disjoint from $D'$.
Consider the reducing spheres $P$ and $P'$ associated with $(D, E)$ and $(D', E)$, respectively.
It is shown by Zupan that if two reducing spheres intersect at most in six points in $\Sigma$ then the vertices represented by them are in the same component of $\mathcal{R}(\Sigma)$ \cite[Theorem 1.3]{Zupan}.
Since $| (P \cap \Sigma) \cap (P' \cap \Sigma) | = 4$, the vertices represented by $P$ and $P'$ are in the same component of $\mathcal{R}(\Sigma)$.
Then by Lemma \ref{lem0.1}, $(D, E)$ and $(D', E)$ are $p$-connected.

Next suppose that $D$ intersects $D'$.
Let $p$ and $p'$ be the unique intersection points of $D$ and $E$, and $D'$ and $E$, respectively.
Let $\Delta$ be a subdisk of $D'$ cut off by an outermost arc $\delta$ of $D \cap D'$ in $D'$ such that $\Delta$ does not contain $p'$.
The arc $\delta$ cuts $D$ into two disks $F_1$ and $F_2$.
Let $F_1$ be the one that contains the point $p$, and let $D_2 = F_1 \cup \Delta$.
Since the disk $D_2$ intersects $E$ only in the point $p$, $(D_2, E)$ is a dual pair.
We observe that $| D_2 \cap D' | < | D \cap D' |$ after a slight isotopy since at least the arc $\delta$ no longer counts.

Inductively, we obtain a sequence of primitive disks $D = D_1, D_2, \ldots, D_n = D'$ in $V$ with the common dual disk $E$ such that $D_i$ is disjoint from $D_{i+1}$ for each $i \in \{ 1, 2, \ldots, n-1 \}$.
To show that $(D, E)$ and $(D', E)$ are $p$-connected, it is enough to show that $(D_i, E)$ and $(D_{i+1}, E)$ are $p$-connected for each $i$, but it follows from the same argument in the first paragraph.
\end{proof}

An essential disk $D$ in $V$ is called a {\em weak reducing disk} if there is an essential disk $E$ in $W$ disjoint from $D$.
The disk $E$ is also a weak reducing disk in $W$.
We call the pair of $D$ and $E$ a {\em weak reducing pair} and denote by $D - E$.
Of course $E - D$ is also a weak reducing pair.
Obviously, any primitive disk is a weak reducing disk.
That is, given any primitive disk $D$ in $V$ with a dual disk $E$ in $W$, the disk $\partial (\N (D \cup E)) \cap W$ in $W$ is disjoint from $D$.
A sequence of weak reducing disks $\Delta_1, \Delta_2, \ldots, \Delta_n$, denoted by $\Delta_1 - \Delta_2 - \cdots - \Delta_n$ is called a {\em weak reducing sequence} for $\Sigma$ (connecting $\Delta_1$ to $\Delta_n$) if the pair of $\Delta_i$ and $\Delta_{i+1}$ is a weak reducing pair $\Delta_i - \Delta_{i+1}$ for each $i \in \{ 1, 2, \ldots, n-1 \}$.
A weak reducing sequence is called a {\em primitive weak reducing sequence} if every disk in the sequence is a primitive disk.

\begin{lemma}\label{lem0.7}
Let $D_1 - E_2 - D_3 - \cdots - E_{2n} - D_{2n+1}$ be a primitive weak reducing sequence for $\Sigma$.
Then there exist dual disks $\Delta_1$ and $\Delta_{2n+1}$ of $D_1$ and $D_{2n+1}$, respectively, such that the dual pairs $(D_1, \Delta_1)$ and $(D_{2n+1}, \Delta_{2n+1})$ are p-connected.
\end{lemma}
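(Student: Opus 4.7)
The plan is to reduce Lemma~\ref{lem0.7} to a single local step about a primitive weak reducing pair, and then chain the local steps together using Lemma~\ref{lem0.2}. The local step is the following claim: for every primitive weak reducing pair $D - E$ with $D \subset V$ and $E \subset W$, there exist a dual disk $\Delta$ of $D$ and a dual disk $F$ of $E$ such that $D \cup \Delta$ is disjoint from $F \cup E$; in particular $(D, \Delta)$ and $(F, E)$ are separated, hence $p$-connected.

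To establish the local step I would use two rounds of standard disk surgery. Starting from any dual disk $\Delta_0$ of $D$ and removing intersections with $E$ by surgering along outermost arcs of $\Delta_0 \cap E$ in $E$, the unique intersection point $\Delta_0 \cap D$ lies in exactly one of the two disks produced by each surgery; a standard essentiality argument keeps that piece as a dual disk of $D$ with strictly fewer intersections with $E$. Iterating yields a dual disk $\Delta$ of $D$ disjoint from $E$, so the reducing sphere $P := \partial \N(D \cup \Delta)$ is disjoint from $E$. Since $W \cap \N(D \cup \Delta)$ is a $3$-ball, the essential disk $E$ must lie on the ``genus-$2$'' side of $P$. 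A symmetric surgery then produces a dual disk $F$ of $E$ in $V$ disjoint from $D$, and isotoping $F$ off $P$ places it on the genus-$2$ side of $P$, hence also disjoint from $\Delta$. Therefore $D \cup \Delta$ and $F \cup E$ lie on opposite sides of $P$, giving the required separation.

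Given the local step, the proof of Lemma~\ref{lem0.7} is a chaining argument along the sequence. For each $k \in \{1, \ldots, n\}$ apply the local step to the pair $D_{2k-1} - E_{2k}$ to obtain a $p$-connection $(D_{2k-1}, \Delta_{2k-1}^-) \sim (F_{2k}^-, E_{2k})$, and to $E_{2k} - D_{2k+1}$ to obtain $(F_{2k}^+, E_{2k}) \sim (D_{2k+1}, \Delta_{2k+1}^+)$. At each even index $2k$ the two pairs $(F_{2k}^-, E_{2k})$ and $(F_{2k}^+, E_{2k})$ share $E_{2k}$ as a common dual disk of the primitive disks $F_{2k}^{\pm}$, so Lemma~\ref{lem0.2} $p$-connects them. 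At each interior odd index $2k+1$ with $1 \le k \le n-1$, exchanging the roles of primitive and dual disk and applying Lemma~\ref{lem0.2} to the primitive disks $\Delta_{2k+1}^{\pm}$ in $W$ with common dual $D_{2k+1}$ shows that $(D_{2k+1}, \Delta_{2k+1}^+)$ and $(D_{2k+1}, \Delta_{2k+1}^-)$ are $p$-connected. Concatenating all of these $p$-connections yields the required $p$-connection from $(D_1, \Delta_1^-)$ to $(D_{2n+1}, \Delta_{2n+1}^+)$.

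The main obstacle is the essentiality bookkeeping inside the disk surgery: each surgery produces two candidate disks, only one of which inherits the distinguished intersection point with the primitive partner, and one needs that piece to be essential so as to remain a genuine dual disk. This is handled by a standard minimality argument on the initial dual disk, together with the incompressibility of essential disks in a handlebody. Everything else---the side-of-$P$ argument, the separation, and the passes through Lemma~\ref{lem0.2}---is routine once this technical point is settled.
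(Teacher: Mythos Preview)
Your overall strategy coincides with the paper's: use outermost-arc surgery to find a dual disk $\Delta$ of $D$ disjoint from $E$, then find a dual disk of $E$ separated from $D\cup\Delta$ by the reducing sphere $P=\partial\N(D\cup\Delta)$, and chain along the sequence via Lemma~\ref{lem0.2} at both the even and the odd indices. The chaining is correct and in fact more explicit than the paper, which leaves the odd-index application of Lemma~\ref{lem0.2} implicit in ``repeating the same argument.''

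The step that fails is ``isotoping $F$ off $P$'' after you have made $F$ disjoint from $D$; in general no such isotopy exists. For a concrete obstruction in the standard genus-$3$ picture, take $D$ to be the first $V$-meridian, $\Delta$ its dual $W$-meridian, $E$ the second $W$-meridian, and let $F$ be the band sum in $V$ of the first two $V$-meridians. Then $F$ is a dual disk of $E$ disjoint from $D$, yet $\partial F$ meets both $\partial\Delta$ and $\partial E$ exactly once, so $\partial F$ cannot be isotoped into either side of $P\cap\Sigma$, and hence $F$ cannot be isotoped off $P\cap V$. The second round is therefore not truly symmetric to the first: the correct move---and exactly what the paper does---is to surger a dual disk of $E$ directly against the separating disk $P\cap V$ rather than against $D$. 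Since $\partial(P\cap V)$ is disjoint from $\partial E$, each outermost surgery keeps the piece meeting $\partial E$ once as a dual disk while strictly reducing $|F\cap(P\cap V)|$. With this correction your local step goes through and the rest of your argument completes the proof exactly as in the paper.
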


\begin{proof}
We may assume that $D_1$ is contained in $V$.
Consider the first two primitive disks $D_1$ and $E_2$.
We show that there exists a dual disk $\Delta_1$ of $D_1$ disjoint from $E_2$ as follows.
Let $\Delta_1$ be a dual disk of $D_1$ that intersects $E_2$ minimally.
For contradiction, suppose $| \Delta_1 \cap E_2 | > 0$.
Choose any outermost subdisk of $E_2$ cut off by $\Delta_1 \cap E_2$.
Since $E_2$ is disjoint from $D_1$, one of the two disks, say $\Delta'_1$, obtained by surgery on $\Delta_1$ along the outermost subdisk would be again a dual disk of $D_1$, but we have $| \Delta'_1 \cap E_2 | < | \Delta_1 \cap E_2 |$, a contradiction.

Next, we show that there exists a dual disk $\Delta_2$ of $E_2$ disjoint from $D_1 \cup \Delta_1$.
The reducing sphere $P$ associated with the dual pair $(D_1, \Delta_1)$ separates $D_1$ and $E_2$.
Choose a dual disk $\Delta_2$ of $E_2$ that intersects the disk $P \cap V$ minimally.
It is enough to show that $\Delta_2$ is disjoint from $P \cap V$.
For contradiction, suppose $| \Delta_2 \cap (P \cap V) | > 0$.
Choose any outermost subdisk of $P \cap V$ cut off by $\Delta_2 \cap (P \cap V)$.
Since $P \cap V$ is disjoint from $E_2$, one of the two disks, say $\Delta'_2$, obtained by surgery on $\Delta_2$ along the outermost subdisk would be again a dual disk of $E_2$, but we have $| \Delta'_2 \cap (P \cap V) | < | \Delta_2 \cap (P \cap V) |$, a contradiction.

Now we have dual disks $\Delta_1$ and $\Delta_2$ of $D_1$ and $E_2$, respectively, such that the dual pairs $(D_1, \Delta_1)$ and $(\Delta_2, E_2)$ are separated.
In the same way, we have dual disks $\Delta'_2$ and $\Delta_3$ of $E_2$ and $D_3$, respectively, such that $(\Delta'_2, E_2)$ and $(D_3, \Delta_3)$ are separated.
The disk $E_2$ is a common dual disk of $\Delta_2$ and $\Delta'_2$.
By Lemma \ref{lem0.2}, $(\Delta_2, E_2)$ and $(\Delta'_2, E_2)$ are $p$-connected, and hence $(D_1, \Delta_1)$ and $(D_3, \Delta_3)$ are $p$-connected.

\begin{figure}[htbp]
\begin{center}
\includegraphics[width=14cm,clip]{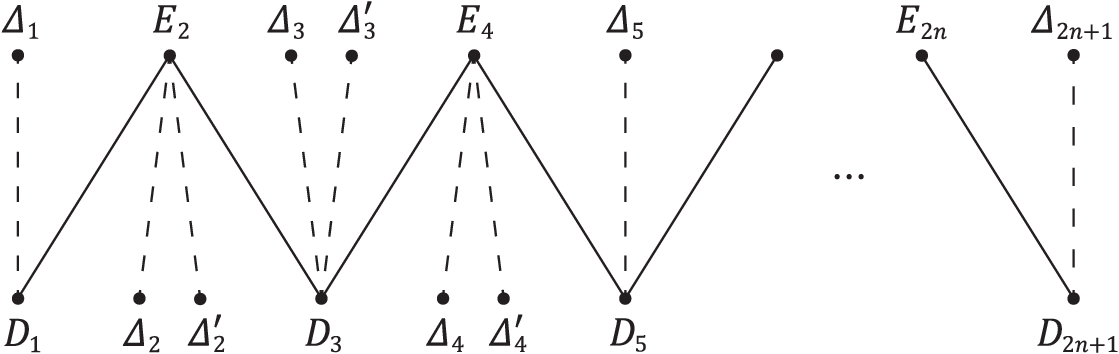}
\captionof{figure}{Finding dual pairs that are $p$-connected.}
\label{Fig2}
\end{center}
\end{figure}

Repeating the same argument, we finally obtain a dual disk $\Delta_{2n+1}$ of $D_{2n+1}$ such that $(D_1, \Delta_1)$ and $(D_{2n+1}, \Delta_{2n+1})$ are $p$-connected. See Figure \ref{Fig2}.
\end{proof}

In the special case that the genus of the splitting $(V, W; \Sigma)$ is exactly three, we can refine a short weak reducing sequence to a new one consisting only of non-separating disks in the middle.

\begin{lemma}\label{lem0.3}
Let $(V, W; \Sigma)$ be a genus-$3$ Heegaard splitting of $S^3$.
Suppose that $D - E - D'$ is a weak reducing sequence for $\Sigma$ such that $E$ is a separating disk in $W$.
Then there is a new weak reducing sequence $D - E' - D'$ or $D - E' - D'' - E'' - D'$ for $\Sigma$ such that $E'$, $D''$, $E''$ are all non-separating.
\end{lemma}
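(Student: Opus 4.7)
The plan is to do a case analysis based on the positions of $\partial D$ and $\partial D'$ relative to $\partial E$. Since $E$ is separating in the genus-$3$ handlebody $W$, it cuts $W$ into a solid torus $W_1$ and a genus-$2$ handlebody $W_2$, and correspondingly $\partial E$ separates $\Sigma$ into a once-punctured torus $\Sigma_1 \subset \partial W_1$ and a once-punctured genus-$2$ surface $\Sigma_2 \subset \partial W_2$. Because $D$ and $D'$ are disjoint from $E$, each of $\partial D$ and $\partial D'$ lies entirely in one of $\Sigma_1$, $\Sigma_2$.

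First I would produce two distinguished non-separating disks in $W$ adapted to this decomposition: let $F_1 \subset W_1$ be the meridian disk of the solid torus $W_1$, with $\partial F_1$ isotoped into $\Sigma_1$, and let $F_2 \subset W_2$ be any non-separating meridian disk of the genus-$2$ handlebody, with $\partial F_2 \subset \Sigma_2$. A direct inspection of $W = W_1 \cup_E W_2$ confirms that each $F_i$ is non-separating in $W$. If $\partial D$ and $\partial D'$ both lie in the same component $\Sigma_j$, then $F_{3-j}$ is automatically disjoint from both $D$ and $D'$ (their boundary curves lie in the other component), which gives the length-$2$ sequence $D - F_{3-j} - D'$ with the required non-separating middle disk.

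In the remaining case, without loss of generality $\partial D \subset \Sigma_1$ and $\partial D' \subset \Sigma_2$. Setting $E' := F_2$ and $E'' := F_1$, the pairs $(D, E')$ and $(E'', D')$ are weak reducing pairs by the same boundary-separation argument. The hard part is then to construct a non-separating disk $D''$ in $V$ disjoint from both $F_1$ and $F_2$. To this end, I would first extend $\{F_1, F_2\}$ to a complete meridian system $\{F_1, F_2, F_3\}$ of $W$, which is possible because $W \setminus (F_1 \cup F_2) = (W_1 \setminus F_1) \cup_E (W_2 \setminus F_2)$ is a solid torus (a $3$-ball glued to a genus-$1$ handlebody along a disk), and $F_3$ may be taken as its meridian. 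Finally I would invoke the classical fact that for the standard Heegaard splitting of $S^3$, any complete meridian system of $W$ admits a dual complete meridian system $\{G_1, G_2, G_3\}$ of $V$ with $|\partial G_i \cap \partial F_j| = \delta_{ij}$; this is seen by verifying it for the standard system and noting that simultaneous handle slides on the two sides preserve duality, while any meridian system of $W$ is slide-equivalent to the standard one. The disk $G_3$ is then non-separating in $V$ (being part of a cut system) and disjoint from $F_1 \cup F_2$ by duality, so setting $D'' := G_3$ yields the desired length-$4$ weak reducing sequence $D - F_2 - G_3 - F_1 - D'$ with $F_2$, $G_3$, $F_1$ all non-separating.
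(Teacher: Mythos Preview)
Your argument is correct and, in Case 1, is essentially identical to the paper's. In the ``different sides'' case, however, you and the paper diverge. The paper first peels off the sub-case where $\partial D$ or $\partial D'$ is isotopic to $\partial E$ (reducing back to Case~1), and then, assuming $\partial D$ lies in the once-punctured torus $\Sigma_1$ and is not parallel to $\partial E$, observes that $D$ is forced to be a \emph{primitive} disk: the exterior of the solid torus $W_1$ is itself a solid torus, and $D$ is its meridian. Taking $E''$ to be the meridian of $W_1$ dual to $D$, the associated reducing sphere $P$ (with $P\cap W = E$) cuts off a genus-$2$ handlebody $V_2\subset V$, and $D''$, $E'$ are simply chosen as a weak reducing pair inside $V_2\cup W_2$. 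Your approach instead avoids both the sub-case analysis and the primitivity observation: you pick $E' = F_2$ and $E'' = F_1$ as meridians of the two pieces of $W$ directly, and manufacture $D'' = G_3$ from a dual meridian system of $V$. This is cleaner in that it treats the two sides uniformly and needs no reducing sphere, at the cost of invoking the existence of a dual cut system for $V$---a standard consequence of Waldhausen's uniqueness theorem, though your handle-slide justification is only a sketch. The paper's argument, by contrast, is entirely self-contained and makes the geometry of the genus-$3$ situation explicit.
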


\begin{proof}
The disk $E$ cuts $W$ into two handlebodies $W_1$ and $W_2$.
Also the circle $\partial E$ cuts $\Sigma$ into two surfaces $\Sigma_1$ and $\Sigma_2$, where $\Sigma_i = \Sigma \cap W_i$ ($i=1,2$).

\vspace{0.5em}

\noindent Case $1$. Both of the loops $\partial D$ and $\partial D'$ lie in the same component of $\Sigma - \partial E$.

In this case, $\partial D \cup \partial D' $ lies in one, say $W_1$, of the two handlebodies $W_1$ and $W_2$.
Replacing $E$ with an essential non-separating disk $E'$ in $W_2$ such that $E'$ is disjoint from the scar of $E$ in $\partial W_2$, we obtain the desired weak reducing sequence $D - E' - D'$ for $\Sigma$.

\vspace{0.5em}

\noindent Case $2$. The loops $\partial D$ and $\partial D'$ lie in different components of $\Sigma - \partial E$.

If at least one of $\partial D$ and $\partial D'$, say $\partial D$, is isotopic to $\partial E$ in $\Sigma$, then we can move $\partial D$ by an isotopy so that the two loops $\partial D$ and $\partial D'$ lie in the same component of $\Sigma - \partial E$.
Then, we can apply the argument in Case $1$ to obtain the desired weak reducing sequence $D - E' - D'$ for $\Sigma$.

Suppose that neither $\partial D$ nor $\partial D'$ is isotopic to $\partial E$ in $\Sigma$.
Without loss of generality, we can assume that $W_1$ is a solid torus (and thus, $W_2$ is a genus-$2$ handlebody), $\partial D$ lies in $\Sigma_1$, and $\partial D'$ lies in $\Sigma_2$.
Thus the exterior of $W_1$ is also a solid torus, and $D$ is a meridian disk of it.
That is, $D$ is a primitive disk in $V$, and we can choose a dual disk $E''$ of $D$, a meridian disk of the solid torus $W_1$ so that $E''$ is disjoint from the scar of $E$ in $\partial W_1$.
Furthermore, we can find a reducing sphere $P$ associated with the dual pair $(D, E'')$ such that $P \cap W = E$.
The reducing sphere $P$ also cuts off a genus-$2$ handlebody $V_2$ from $V$ such that $V_2 \cap W_2 = \Sigma_2$.
Choosing non-separating essential disks $E'$ in $W_2$ and $D''$ in $V_2$ so that $E'$ is disjoint from $D''$ and $E' \cup D''$ is disjoint from $P$, we obtain the desired weak reducing sequence $D - E' - D'' - E'' - D'$ for $\Sigma$ (see Figure \ref{Fig3}).
\end{proof}

\begin{figure}[htbp]
\begin{center}
\includegraphics[width=6cm,clip]{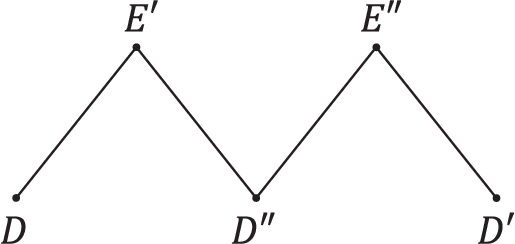}
\captionof{figure}{A weak reducing sequence with $E', D'', E''$ non-separating.}
\label{Fig3}
\end{center}
\end{figure}

A symmetric argument works for a weak reducing sequence $E - D - E'$, where $D$ is a separating disk in $V$.
Given any weak reducing sequence connecting $\Delta$ to $\Delta'$, applying Lemma \ref{lem0.3} repeatedly, we obtain a new weak reducing sequence connecting $\Delta$ to $\Delta'$ in which the intermediate disks are all non-separating.

\section{Topologically minimal surfaces}\label{sec:topologically_minimal_surface}

Bachman defined a critical surface \cite{Bachman1} and extended it to the notion of a topologically minimal surface \cite{Bachman3}.
Let $F$ be a surface in a $3$-manifold $M$, separating $M$ into two $3$-manifolds $V$ and $W$.
The {\em disk complex} $\mathcal{D}(F)$ is a simplicial complex defined as follows.
The vertices of $\mathcal{D}(F)$ are the isotopy classes of compressing disks for $F$ in $V$ and $W$, and $k+1$ distinct vertices span a $k$-simplex if they admit pairwise disjoint representatives.
The surface $F$ is then called a {\em critical surface} if the compressing disks for $F$ in $V$ and $W$ can be partitioned as $\mathcal C_1 \sqcup \mathcal C_2$ and $\mathcal C'_1 \sqcup \mathcal C'_2$, respectively, satisfying the following properties.

\begin{enumerate}
\item There exist compressing disks $D_i \in \mathcal C_i$ and $E_i \in \mathcal C'_i$ ($i = 1, 2$) such that $D_i \cap E_i = \emptyset$.
\item For any compressing disks $D_i \in \mathcal C_i$ and $E_{3-i} \in \mathcal C'_{3-i}$ ($i = 1, 2$), $D_i \cap E_{3-i} \ne \emptyset$.
\end{enumerate}

It is known that $F$ is critical if and only if $\pi_1(\mathcal{D}(F)) \ne 1$.
Also the following lemma is shown in \cite{Bachman2}.

\begin{lemma}\cite[Lemma 8.5]{Bachman2}\label{lem0.4}
If there exist two weak reducing disks $\Delta$ and $\Delta'$ for $F$ such that there is no weak reducing sequence connecting $\Delta$ to $\Delta'$, then $F$ is critical.
\end{lemma}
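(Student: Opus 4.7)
The plan is to verify directly the combinatorial definition of a critical surface. Let $\sim$ be the equivalence relation on weak reducing disks for $F$ generated by the weak reducing pair relation, and let $\mathcal{K}_\Delta$ denote the equivalence class of $\Delta$; by hypothesis $\Delta' \notin \mathcal{K}_\Delta$. Define
\[
\mathcal{C}_1 := \mathcal{K}_\Delta \cap \{\text{compressing disks in } V\}, \qquad \mathcal{C}'_1 := \mathcal{K}_\Delta \cap \{\text{compressing disks in } W\},
\]
and let $\mathcal{C}_2$ and $\mathcal{C}'_2$ denote the complements in the respective sets of compressing disks. In particular, any compressing disk which is not a weak reducing disk lands automatically in $\mathcal{C}_2$ or $\mathcal{C}'_2$.

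For condition (1), I would exhibit the witness pair in $\mathcal{C}_1 \times \mathcal{C}'_1$ using $\Delta$ together with any compressing disk on the opposite side of $F$ disjoint from $\Delta$ (such a disk exists since $\Delta$ is weak reducing, and lies in $\mathcal{K}_\Delta$ by definition). The witness pair in $\mathcal{C}_2 \times \mathcal{C}'_2$ is produced from $\Delta'$ together with a partner witnessing that $\Delta'$ is weak reducing; this partner cannot lie in $\mathcal{K}_\Delta$, since otherwise appending the weak reducing pair consisting of $\Delta'$ and the partner to a sequence from $\Delta$ to the partner would yield a weak reducing sequence from $\Delta$ to $\Delta'$, contradicting the hypothesis. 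A brief case analysis handles the three possible side-assignments of $(\Delta, \Delta')$ among $V$ and $W$.

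Condition (2) is then immediate from the construction. If $D \in \mathcal{C}_1$ and $E \in \mathcal{C}'_2$ were disjoint, then $D - E$ would be a weak reducing pair; concatenating a weak reducing sequence from $\Delta$ to $D$ (available because $D \in \mathcal{K}_\Delta$) with this pair would force $E \in \mathcal{K}_\Delta$, contradicting $E \in \mathcal{C}'_2$. The symmetric case $D \in \mathcal{C}_2$, $E \in \mathcal{C}'_1$ is handled identically, and compressing disks in $\mathcal{C}_2$ or $\mathcal{C}'_2$ that are not weak reducing intersect every compressing disk on the opposite side by definition, so they cause no difficulty.

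The only slightly delicate point is the side-assignment bookkeeping for $\Delta$ and $\Delta'$ when producing the two witness pairs in condition (1); beyond that, both conditions defining a critical surface reduce directly to the closure properties of the weak-reducing-sequence equivalence relation, with no further surgery or geometric input required.
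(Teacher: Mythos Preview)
The paper does not supply a proof of this lemma; it simply cites it as \cite[Lemma 8.5]{Bachman2}. Your argument is correct and is essentially the standard one: partitioning the compressing disks according to membership in the weak-reducing equivalence class $\mathcal{K}_\Delta$ immediately gives condition~(2) by the closure of $\mathcal{K}_\Delta$ under adjacency, and the hypothesis $\Delta' \notin \mathcal{K}_\Delta$ supplies the second witness pair for condition~(1). This is precisely how Bachman proves it, so there is nothing to compare beyond noting that you have reconstructed the cited proof faithfully.
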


A surface $F$ is called a {\em topologically minimal surface} if $\mathcal{D}(F) = \emptyset$ or $\pi_i(\mathcal{D}(F)) \ne 1$ for some $i$.
The {\em topological index} of a topologically minimal surface is $0$ if $\mathcal{D}(F) = \emptyset$ and the smallest $i$ such that $\pi_{i-1}(\mathcal{D}(F)) \ne 1$ otherwise.
A critical surface is an index $2$ topologically minimal surface.

As mentioned in Introduction, it has been claimed in \cite{Appel} and in \cite{Campisi-Torres} that a genus-$g$ Heegaard surface $\Sigma$ in $S^3$ with $g \geq 2$ is topologically minimal with index $2g - 1$, which implies that $\mathcal{D}(\Sigma)$ is simply connected, and hence $\Sigma$ is not a critical surface.
In Appendix \ref{sec: Simple connectivity of the disk complexes of the standard Heegaard surfaces of the 3-sphere}, we provide an independent proof of the claim that $\mathcal{D}(\Sigma)$ is simply connected (see Theorem \ref{thm:simply_connected}).
Now we are ready to present a key lemma for the main theorem.
Still we assume that $(V, W; \Sigma)$ is a genus-$g$ Heegaard splitting of $S^3$ with $g \ge 3$.

\begin{lemma}\label{key_lemma}
Let $D$ and $D'$ be any non-separating weak reducing disks in $V$.
Then there exists a weak reducing sequence
\[ D = D_1  - E_2  - D_3 -  \cdots  - E_{2n}  - D_{2n+1} = D' \]
connecting $D$ to $D'$.
If the genus $g$ is three, we can additionally require that $D_i$  and $E_j$ are all non-separating for each $i \in \{ 3, 5, \ldots, 2n-1 \}$ and $j \in \{ 2, 4, \ldots, 2n \}$.
Furthermore, we may assume that $\partial D_i$ and $\partial D_{i+2}$ are not isotopic to each other in $\Sigma$ for $i \in \{ 1, 3, \ldots, 2n-1 \}$, and $\partial E_j$ and $\partial E_{j+2}$ are not isotopic to each other in $\Sigma$ for $j \in \{ 2, 4, \ldots, 2n-2 \}$.
\end{lemma}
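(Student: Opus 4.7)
The plan is to construct the sequence in three stages: first produce some weak reducing sequence joining $D$ to $D'$, next (in the case $g = 3$) modify it so that all intermediate disks are non-separating, and finally shorten it to enforce the non-isotopy condition.

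For existence, I would invoke Theorem~\ref{thm:simply_connected} from the appendix, which asserts that $\mathcal{D}(\Sigma)$ is simply connected. In particular, $\pi_1(\mathcal{D}(\Sigma)) = 1$, so $\Sigma$ is not a critical surface, and the contrapositive of Lemma~\ref{lem0.4} applied to $D$ and $D'$ yields a weak reducing sequence between them. Consecutive terms in such a sequence automatically lie in opposite handlebodies, and since both endpoints lie in $V$, the total number of terms is odd and the sequence takes the claimed form $D_1 - E_2 - D_3 - \cdots - E_{2n} - D_{2n+1}$.

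For the non-separating refinement when $g = 3$, I would apply Lemma~\ref{lem0.3} (together with its symmetric analog obtained by swapping the roles of $V$ and $W$) iteratively. Whenever an intermediate $E_j$ is separating, the length-$3$ subsequence $D_{j-1} - E_j - D_{j+1}$ satisfies the hypothesis of Lemma~\ref{lem0.3}, which locally replaces it with $D_{j-1} - E' - D_{j+1}$ or $D_{j-1} - E' - D'' - E'' - D_{j+1}$, all of whose newly inserted disks are non-separating; the symmetric move handles a separating intermediate $D_i$. These replacements are local, fix the endpoints $D$ and $D'$, and never introduce new separating disks, so the number of separating intermediate disks strictly decreases at each step, and the process terminates.

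For the non-isotopy condition, I would pass to a weak reducing sequence from $D$ to $D'$ of minimum total length (restricted to those satisfying the non-separating refinement when $g = 3$). If in such a minimum sequence $\partial D_i \simeq \partial D_{i+2}$ in $\Sigma$ for some $i$, then $D_i$ and $D_{i+2}$ are isotopic as disks in the handlebody $V$, and a common representative $D^\ast$ disjoint from both $E_{i-1}$ and $E_{i+3}$ (whose existence follows from a standard disk-surgery argument, using that the vanishing geometric intersection numbers $i(\partial D_i, \partial E_{i-1}) = i(\partial D_i, \partial E_{i+3}) = 0$ are both realized by representatives in the isotopy class of $\partial D_i$) would allow us to delete $E_{i+1}$ and $D_{i+2}$, producing a strictly shorter weak reducing sequence from $D$ to $D'$ and contradicting minimality; the symmetric argument rules out $\partial E_j \simeq \partial E_{j+2}$. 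The main obstacle is the simple connectivity of $\mathcal{D}(\Sigma)$ invoked in the first stage, which is the genuinely nontrivial input supplied by the appendix; once that is in hand, the remaining steps reduce to iterative applications of Lemma~\ref{lem0.3} and a minimality argument.
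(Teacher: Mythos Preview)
Your proposal is correct and follows essentially the same three-stage approach as the paper: existence via Theorem~\ref{thm:simply_connected} and Lemma~\ref{lem0.4}, non-separating refinement via iterated application of Lemma~\ref{lem0.3}, and the non-isotopy condition via shortening. The only cosmetic difference is that the paper phrases the third stage as an iterative removal (``if $\partial D_i \simeq \partial D_{i+2}$, delete $E_{i+1}$ and $D_{i+2}$ and repeat'') rather than a minimality argument, and does not spell out the common-representative issue you raise; your more careful treatment of that point is a slight improvement in rigor but not a different idea.
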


\begin{proof}
By Theorem \ref{thm:simply_connected}, the Heegaard surface $\Sigma$ is not a critical surface, and hence by Lemma \ref{lem0.4}, there exists a weak reducing sequence connecting $D$ to $D'$.
In the particular case where the genus $g$ is three, this weak reducing sequence can be modified to be a weak reducing sequence
\[ D = D_1 - E_2 - D_3 -  \cdots - E_{2n} - D_{2n+1} = D' \]
connecting $D$ to $D'$ consisting only of non-separating disks by Lemma~\ref{lem0.3}.

If there exists some $i \in \{ 1, 3, \ldots, 2n-1 \}$ such that $\partial D_i$ and $\partial D_{i+2}$ are isotopic in $\Sigma$, then we can remove $E_{i+1}$ and $D_{i+2}$ from the sequence to obtain a shorter weak reducing sequence from $D$ to $D'$.
Similarly, if there exists some $j \in \{ 2, 4, \ldots, 2n-2 \}$ such that $\partial E_j$ and $\partial E_{j+2}$ are isotopic in $\Sigma$, then we can remove $D_{j+1}$ and $E_{j+2}$ from the sequence.
By repeating this process finitely many times, we finally obtain the desired weak reducing sequence.
\end{proof}

\section{Proof of the main theorem}\label{sec:proof_of_main_theorem}

Throughout the section, let $(V, W; \Sigma)$ be the genus-$3$ Heegaard splitting of $S^3$.

\begin{lemma}\label{lem1.1}
Suppose that $D - E - D'$ is a weak reducing sequence such that $D$, $E$, and $D'$ are all non-separating.
If $\partial D$ and $\partial D'$ are not isotopic to each other in $\Sigma$, then $E$ is a primitive disk.
\end{lemma}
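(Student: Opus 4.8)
The goal is to show that in a genus-$3$ splitting, a weak reducing sequence $D - E - D'$ with all three disks non-separating and $\partial D \not\simeq \partial D'$ forces $E$ to be primitive. The plan is to work in $W$ and analyze how $E$, together with $D$ and $D'$, sits inside the genus-$3$ handlebody. First I would cut $W$ along $E$: since $E$ is non-separating, $W \setminus \N(E)$ is a genus-$2$ handlebody, and the complement of $W$ in $S^3$ (namely $V$) together with the $1$-handle dual to $E$ gives structure on the $V$-side. The key object to track is the essential disk $E_\ast = \partial\N(E) \cap V$-type data, or rather the solid-torus/handlebody decomposition that $E$ induces. Because $D$ and $D'$ are disjoint from $E$ and non-separating in $V$, their boundaries lie in $\Sigma \setminus \partial E$; I would use the hypothesis $\partial D \not\simeq \partial D'$ to rule out the degenerate configuration where everything is ``parallel'' to $\partial E$.

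The heart of the argument should be a Euler-characteristic / counting constraint special to genus $3$. Cutting $\Sigma$ along $\partial E$ yields either a once-punctured torus and a twice-punctured torus (if $\partial E$ is non-separating in $\Sigma$) or two once-punctured genus-pieces summing to genus $2$ — but since $E$ is a disk in $W$ bounding in $W$, $\partial E$ is compressible to one side, and the relevant split gives a solid-torus piece. Concretely: after cutting $W$ along $E$ we get a genus-$2$ handlebody $W'$; the disks $D, D'$ become disjoint non-separating compressing disks for $\Sigma$ on the $V$-side whose boundaries live on $\partial W'$. I would then argue that the exterior of $E$ in $S^3$, namely $V \cup (\text{2-handle along }E)$, is a genus-$2$ handlebody (since $S^3$ cut along the reducing structure... ) — more carefully, $E$ non-separating in $W$ means $W$ drilled along $E$ has genus $2$, and by a standard Heegaard-splitting-of-handlebody argument the ``dual'' solid torus condition needed for primitivity amounts to exhibiting a single meridian disk of $V$ meeting $\partial E$ once. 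The plan is to produce this disk from $D$ and $D'$ by a disk-swap / band argument: if neither $D$ nor $D'$ is already dual to $E$, then both $\partial D$ and $\partial D'$ are disjoint from $\partial E$ and hence have even (possibly zero) algebraic intersection with the cocore curve; a homology computation in $H_1(W; \mathbb{Z}/2)$ or $H_1$ of the relevant torus, combined with $\partial D \not\simeq \partial D'$, should force the quotient (genus-$2$ handlebody exterior) to actually be a solid torus, i.e. $g=3$ drops to the situation where $E$ separates the complementary handlebody of $S^3 \setminus \N(E)$ appropriately — which is exactly primitivity of $E$.

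I expect the main obstacle to be pinning down precisely which topological configuration of $(\partial D, \partial E, \partial D')$ on the genus-$3$ surface can occur, and eliminating the non-primitive possibilities. In particular one must rule out the case where $E$ is non-separating in $W$ but its ``dual'' complementary piece in $S^3$ is a genus-$2$ handlebody rather than a solid torus; this is where the hypotheses ``$D, D'$ non-separating in $V$'' and ``$\partial D \not\simeq \partial D'$'' must be used together — presumably $\partial D$ and $\partial D'$, being two distinct non-isotopic curves both disjoint from $\partial E$ and bounding in $V$, cannot coexist unless the $V$-side of $E$'s complement is as small (torus-like) as possible. I would handle this by a careful case analysis on whether $\partial E$ is separating or non-separating in $\Sigma$, using that $E$ itself is non-separating in $W$ to constrain $\partial E$, and then in each case count the maximal number of pairwise-disjoint, pairwise-non-isotopic, individually-non-separating $V$-compressing curves available in the complement of $\partial E$ in the genus-$3$ surface. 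The genus-$3$ bound is exactly tight enough that the existence of both $\partial D$ and $\partial D'$ pushes us into the primitive configuration; higher genus would leave room for non-primitive $E$, consistent with the paper restricting this lemma to genus $3$.
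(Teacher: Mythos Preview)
Your plan never becomes a proof: the crucial step---actually producing a disk in $V$ meeting $\partial E$ once, or equivalently showing that $E$ lies in a genus-$1$ summand---is left as ``a homology computation \ldots\ should force'' and ``count the maximal number of \ldots\ curves,'' with no mechanism given. Several of the surrounding remarks are also off. Since $E$ is assumed non-separating in $W$, its boundary $\partial E$ is automatically non-separating in $\Sigma$, so your proposed case split on that is vacuous; and the object ``$E_\ast = \partial\N(E)\cap V$'' does not make sense as written because $E\subset W$. More seriously, you never address the possibility that $D$ and $D'$ intersect: the hypothesis is only that each is disjoint from $E$, not from each other, and your curve-counting idea in $\Sigma\setminus\partial E$ presupposes $\partial D\cap\partial D'=\emptyset$.

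The paper's argument is both different and much shorter. First it reduces to the disjoint case by an outermost-arc surgery: if $D\cap D'\neq\emptyset$, surger $D$ along an outermost subdisk of $D'$ to get a non-separating $G$ disjoint from both $D$ and $E$, with $G$ not isotopic to $D$; then replace $D'$ by $G$. In the disjoint case, compress $\Sigma$ along $E$ to get $\Sigma_0$ with two scars of $E$, then compress $\Sigma_0$ along $D\cup D'$. If the two $E$-scars landed in different components you would build a non-separating closed surface in $S^3$, which is impossible; so they lie in one component $\Sigma_1$. A genus count (this is where $g=3$ enters) forces $\Sigma_1$ to be a $2$-sphere carrying both $E$-scars and at least one scar of each of $D,D'$. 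A circle $\gamma\subset\Sigma_1$ separating the $E$-scars from the $D,D'$-scars then bounds disks in both $V$ and $W$, giving a reducing sphere $P$ with $E$ on its genus-$1$ side---hence $E$ is primitive. This compression-and-scar argument is the missing idea in your plan; your homology/counting sketch does not reach it and, as written, does not cover the intersecting case at all.
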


\begin{proof}
We may assume that $D$ and $D'$ are contained in $V$.

\vspace{0.5em}

\noindent Case $1$.
The two disks $D$ and $D'$ are disjoint from each other.

If we compress $\Sigma$ along $E$, we obtain a surface $\Sigma_0$ containing two scars of $E$.
When we further compress $\Sigma_0$ along $D \cup D'$, if the two scars of $E$ lie in different components after the compression, we obtain a non-separating closed surface in $S^3$, which is a contradiction.
Thus, the two scars of $E$ lie in the same component, say $\Sigma_1$, of the surface obtained by compressing $\Sigma_0$ along $D \cup D'$.
We can see that $\Sigma_1$ also contains either
\begin{itemize}
\item the two scars of $D$ and the two scars of $D'$, or
\item one scar of $D$ and one scar of $D'$,
\end{itemize}
according as $D \cup D'$ is non-separating or separating in $V$.
Since the genus of $\Sigma$ is three, it is easily seen that the surface $\Sigma_1$ is the $2$-sphere.
Let $\gamma$ be a circle in $\Sigma_1$ separating the scars of $E$ from the scars of $D$ and $D'$.
The circle $\gamma$ can be regarded as a loop in $\Sigma$ and bounds essential disks in both $V$ and $W$.
Thus $\gamma$ is the intersection of $\Sigma$ with a reducing sphere $P$.
The reducing sphere $P$ separates $E$ from $D \cup D'$.
The disk $E$ is contained in the side of $P$ that is the genus-$1$ summand, and hence $E$ is a primitive disk.

\vspace{0.5em}

\noindent Case $2$.
The two disks $D$ and $D'$ intersect each other.

Suppose that $D$ intersects $D'$ minimally.
We perform surgery on the disk $D$ along an outermost subdisk of $D'$ cut off by $D \cap D'$.
At least one of the two disks obtained by the surgery is then non-separating, which we denote by $G$.
Obviously, $G$ is disjoint from $D$ and from $E$, and hence we have a new weak reducing sequence $D - E - G$, consisting of non-separating disks.
We observe that $D$ and $G$ are not isotopic to each other, since we assumed $D$ intersects $D'$ minimally.
Thus, by the same arguments in Case $1$, we see that $E$ is a primitive disk.
\end{proof}

\begin{proof}[Proof of Theorem \ref{main_theorem}]
Let $P$ and $P'$ be any two reducing spheres.
We choose dual pairs $(D, E)$ and $(D', E')$ such that $P$ and $P'$ are disjoint from $D \cup E$ and $D' \cup E'$, respectively.
To show that $\mathcal{R}(\Sigma)$ is connected, that is, to find a path joining the vertices represented by $P$ and $P'$ in $\mathcal{R}(\Sigma)$, it is enough to show that $(D, E)$ and $(D', E')$ are $p$-connected, by Lemma \ref{lem0.1}.

By Lemma \ref{key_lemma} and Lemma \ref{lem1.1}, we have a primitive weak reducing sequence connecting $D$ and $D'$.
By Lemma \ref{lem0.7}, there exist dual disks $\Delta$ and $\Delta'$ of $D$ and $D'$, respectively, such that $(D, \Delta)$ and $(D', \Delta')$ are $p$-connected.
By Lemma \ref{lem0.2}, the dual pairs $(D, E)$ and $(D, \Delta)$ are $p$-connected, and also $(D', \Delta')$ and $(D', E')$ are $p$-connected.
Hence $(D, E)$ and $(D', E')$ are $p$-connected.
\end{proof}

We remark that the proofs of Lemma \ref{lem1.1} and Theorem \ref{main_theorem} heavily depend on the fact that the genus of the splitting is three.
But we still expect that the following question has an affirmative answer for the splittings of higher genera.

\begin{question}
Given a genus-$g$ Heegaard splitting $(V, W; \Sigma)$ of the $3$-sphere with $g \geq 3$, and given any two primitive disks $D$ and $D'$ in $V$, can we always find a primitive weak reducing sequence connecting $D$ to $D'$?
\end{question}

\medskip

\appendix
\section{Simple connectivity of the disk complexes of the standard Heegaard surfaces of the 3-sphere}
\label{sec: Simple connectivity of the disk complexes of the standard Heegaard surfaces of the 3-sphere}

Let $(V, W; \Sigma)$ be a genus-$g$ Heegaard splitting of the 3-sphere $S^3$ for $g \geq 2$, and let $\mathcal{D}(\Sigma)$ denote the disk complex associated with the Heegaard surface $\Sigma$.
The full subcomplexes of $\mathcal{D}(\Sigma)$, spanned by the vertices corresponding to compressing disks in $V$ and $W$, respectively, are both contractible.
Since these subcomplexes are joined by edges represented by weak reducing pairs, it follows that $\mathcal{D}(\Sigma)$ is connected.
Our goal is to demonstrate that $\mathcal{D}(\Sigma)$ is simply connected.

Throughout this appendix, all maps and surfaces are assumed to be smooth.
We begin by summarizing some results introduced by Ivanov in \cite{Iva02}.
To prove the connectedness of the curve complex for a surface, Ivanov used a result on $1$-parameter families of functions on a surface due to Cerf \cite{Cerf_1}, \cite{Cerf_2}, specifically Lemma 3.2.A in \cite{Iva02}.
He then extended this to a version involving 2-parameter families of functions, which was also proven in \cite{Cerf_1}.
We restate it here as follows.

\begin{lemma}\label{lemma:2-parameter_family}
Let $\{ f_{\beta} : \Sigma \rightarrow \mathbb{R} \}_{\beta \in B}$ be any continuous family of functions, where the parameter space $B$ is the unit disk.
Then $\{ f_{\beta} \}_{\beta \in B}$ can be approximated arbitrarily closely by a continuous family of functions $\{ g_{\beta} \}_{\beta \in B}$, where each $g_{\beta}$ belongs to one of the following three classes:
\begin{itemize}
    \item Morse functions with all critical values distinct;
    \item Morse functions with exactly two or three identical critical values, with all other critical values distinct from each other and from the identical values;
    \item functions where all critical values are distinct, except for exactly one non-Morse critical point. In appropriate local coordinates $(x, y)$ around this critical point, the function has the form $x^3 \pm y^2 + c$ or $\pm x^4 \pm y^2 + c$ for some constant $c \in \mathbb{R}$.
\end{itemize}
\end{lemma}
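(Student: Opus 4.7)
The plan is to follow Cerf's stratification of the function space $C^\infty(\Sigma, \mathbb{R})$, equipped with the Whitney $C^\infty$-topology, and to apply a jet-transversality argument to the 2-parameter family $\{f_\beta\}_{\beta \in B}$ viewed as a smooth map $F\colon B \to C^\infty(\Sigma, \mathbb{R})$. The open dense stratum consists of Morse functions with pairwise distinct critical values, and a generic 2-disk in the function space should avoid every stratum of codimension at least $3$, while meeting each stratum of codimension $1$ or $2$ transversely. The entire proof then reduces to enumerating the low-codimension strata with their normal forms and controlling how a 2-parameter family can interact with them.

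First I would enumerate the relevant strata of codimension at most two. By Thom--Boardman theory, applied in the jet spaces $J^k(\Sigma, \mathbb{R})$ for $k \leq 4$, the non-Morse critical points that appear generically in families of functions on a surface have local normal forms $x^3 \pm y^2 + c$ (the $A_2$ stratum, of codimension $1$) and $\pm x^4 \pm y^2 + c$ (the $A_3$ stratum, of codimension $2$); all higher-order $A_k$ singularities and all corank-$2$ singularities ($D_k$, $E_k$, and so on) lie in strata of codimension $\geq 3$. Independently, coincidences of critical values form a stratification whose codimension equals one less than the multiplicity of the coincidence, so two coincident values give codimension $1$ and three coincident values give codimension $2$.

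Next I would carry out the approximation step. Using Thom's jet transversality theorem, $F$ can be approximated arbitrarily closely by a map $G\colon B \to C^\infty(\Sigma, \mathbb{R})$ whose jet extension is transverse to each of the strata described above. Transversality forces the preimage in $B$ of each codimension-$1$ stratum to be a smooth $1$-submanifold of $B$ and the preimage of each codimension-$2$ stratum to be a finite set of isolated points. A further small local perturbation, applied near any isolated point of $B$ where the preimages of two distinct codimension-$1$ strata happen to cross, separates those two codimension-$1$ preimage curves without destroying transversality. After this final perturbation, each point $\beta \in B$ sees at most one of the generic degenerations, placing the approximating function $g_\beta := G(\beta)$ in exactly one of the three classes listed in the statement.

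The step I expect to be the main obstacle is the codimension-$2$ bookkeeping, namely verifying that $A_3$ and the triple-value-coincidence stratum exhaust the irreducible codimension-$2$ phenomena for functions on a surface, and organising the perturbation that removes the reducible codimension-$2$ phenomena (a non-Morse critical point plus a value coincidence, two separate value coincidences, or two independent $A_2$'s occurring at the same parameter). Both are classical consequences of Thom--Mather singularity theory and Cerf's original codimension calculation, so the proof amounts to invoking those results rather than redoing the underlying classification by hand.
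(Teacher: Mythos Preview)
The paper does not actually prove this lemma: it is stated as a citation, attributed to Cerf \cite{Cerf_1} and restated following Ivanov \cite{Iva02}. There is no argument in the paper beyond the attribution, so your proposal is doing substantially more than the paper does.

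That said, your sketch is the correct outline of Cerf's own argument, and it is sound. The stratification of $C^\infty(\Sigma,\mathbb{R})$ by singularity type and by critical-value coincidences, the codimension count placing $A_2$ and a single value coincidence in codimension~$1$, and $A_3$ and a triple coincidence in codimension~$2$, and the use of multijet transversality to put a generic $2$-disk in general position with respect to these strata---all of this is exactly how the result is established. Your handling of the reducible codimension-$2$ phenomena (two separate $A_2$'s, an $A_2$ together with a value coincidence, or two independent value coincidences at the same parameter) by a further local perturbation is also the standard move: since the two degenerations involve disjoint regions of $\Sigma$, one can perturb the family near one of them to slide the corresponding codimension-$1$ curve in $B$ off the other. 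One small point worth making explicit is that the perturbation separating two crossing codimension-$1$ curves in $B$ is possible precisely because the relevant strata in function space are \emph{not} a fixed pair of hypersurfaces but depend on the family, so there is no topological obstruction to removing the crossing; this is implicit in your appeal to independence of the two conditions but could be stated more sharply. Overall, you have supplied a genuine proof sketch where the paper only supplies a reference.
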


A function $f : \Sigma \rightarrow \mathbb{R}$ is said to be \emph{admissible}\footnote{In Ivanov \cite{Iva87}, this is translated as a \emph{non-degenerate} function. However, we avoid this term here as it is typically associated with another class of functions.}
if $f$ admits a level component that is a non-trivial circle containing no critical points.
The following lemma corresponds exactly to a claim made in the proof of Theorem 3.2.B in \cite{Iva02}.

\begin{lemma}\label{lemma:level_component}
Any function $f$ belonging to one of the three classes in Lemma \ref{lemma:2-parameter_family} is admissible.
\end{lemma}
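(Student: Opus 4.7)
The plan is to produce, for each $f$ in one of the three classes, a regular level circle of $f$ that is non-separating in $\Sigma$; since a non-separating simple closed curve on a closed orientable surface cannot bound a disk, any such circle is essential (i.e.\ non-trivial) and automatically contains no critical points, so $f$ will be admissible. The main tool is the Reeb graph $G$ of $f$, i.e., the quotient $\Sigma / \sim$ where $x \sim y$ if and only if $f(x) = f(y)$ and $x, y$ lie in the same connected component of that level set. Since the critical points of $f$ are isolated in all three classes, $G$ is a finite connected graph, the quotient map $\pi : \Sigma \to G$ has connected fibers, and each regular value of $f$ corresponds to an interior point of some edge of $G$.

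The key step is to verify that $b_1(G) \geq 1$. For a Morse function with distinct critical values a standard calculation --- each minimum or maximum contributes a valence-$1$ vertex and each saddle a valence-$3$ vertex --- gives $V = m_0 + m_1 + m_2$ and $E = \tfrac{1}{2}(m_0 + m_2 + 3 m_1)$, so $\chi(G) = \tfrac{1}{2}\chi(\Sigma) = 1 - g$ and hence $b_1(G) = g \geq 2$. For class 2, whenever two or three coinciding critical points lie on the same connected component of their common level set, the corresponding vertices of $G$ coalesce; this decreases $V$ while preserving $E$, and thus only increases $b_1(G)$. For class 3, I would treat the degenerate critical point case by case according to its local model: the associated vertex has valence $1$ for $\pm x^4 \pm y^2$ (Poincar\'e--Hopf index $\pm 1$), valence $2$ for $x^3 \pm y^2$ (index $0$), and valence $1$ or $3$ for $\pm x^4 \mp y^2$ (index $-1$), where in the last case the valence is constrained to be odd by the parity requirement $\sum_v \mathrm{val}(v) = 2E$. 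Using $\chi(\Sigma) = 2 - 2g$ together with the Poincar\'e--Hopf equality $\chi(\Sigma) = \sum_p \mathrm{index}(p)$, a direct computation yields $b_1(G) \geq g - 1 \geq 1$ in every case.

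Once $b_1(G) \geq 1$ is secured, $G$ contains a simple cycle $C$. I would pick any edge $e$ of $C$ and set $\sigma_e := \pi^{-1}(*)$, where $*$ is the midpoint of $e$; this is a single regular level circle of $f$. Since $e$ lies on a cycle, $e$ is not a bridge of $G$, so $G \setminus \{*\}$ is connected. A standard path-lifting argument --- lifting a path in $G$ via arcs in the annular regions over open edges, crossings of the local surface pieces over vertices, and arcs within connected level components --- shows that $\pi^{-1}$ sends connected subsets of $G$ to connected subsets of $\Sigma$. Hence $\Sigma \setminus \sigma_e = \pi^{-1}(G \setminus \{*\})$ is connected, $\sigma_e$ is non-separating in $\Sigma$, and therefore essential; this finishes the proof.

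I expect the hardest part to be the case analysis for class 3: verifying the Poincar\'e--Hopf indices and the local Reeb-graph structure at each of the degenerate normal forms, and using the parity constraint to rule out combinatorially inconsistent valences for the degenerate saddle $\pm x^4 \mp y^2$. The remaining steps are essentially routine once the path-lifting property of $\pi$ has been set up.
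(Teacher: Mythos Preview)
Your Reeb-graph strategy is sound and yields a self-contained argument, whereas the paper does not actually prove this lemma: it simply records that the statement is a claim inside Ivanov's proof of Theorem~3.2.B and notes as the ``key property'' that every function in the three classes has a critical point which is neither a local maximum nor a local minimum. Your approach is a legitimate fleshing-out of that hint, and the final paragraph (non-bridge edge $\Rightarrow$ non-separating level circle via path-lifting along the Reeb map) is correct.

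There is, however, a genuine slip in your Class~2 step. When two or three saddle values coalesce and the saddles land on the same level component, the corresponding vertices of the generic Reeb graph were joined by at least one edge (that edge is exactly the annulus which collapses as the values merge), so the passage from $G'$ to $G$ is an edge contraction, not a pure vertex identification: both $V$ and $E$ drop, and $b_1$ need not increase. In fact $b_1(G)$ can be strictly smaller than $g$. The conclusion $b_1(G)\geq 1$ is still true, but it needs a different justification: for the merged vertex $v$ one has $\chi(\Sigma_v)=-k$ with $k\in\{2,3\}$, and since there is at least one adjacent regular circle on each side of the critical level, the valence satisfies $d(v)\geq 2$; hence $g_v=(2-d(v)-\chi(\Sigma_v))/2\leq 1$. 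All other vertices are ordinary Morse points with $g_{v'}=0$, so the identity $\chi(\Sigma)=2\chi(G)-2\sum_v g_v$ gives $b_1(G)=g-\sum_v g_v\geq g-1\geq 1$. With this correction your Class~2 case goes through, and the same formula cleanly handles Class~3 (where the degenerate point has $g_v=0$), making your case analysis there simpler than the index/parity bookkeeping you sketched. A minor aside: for $\pm x^4\pm y^2$ with equal signs the Poincar\'e--Hopf index is $+1$ in both cases (local extremum), not $\pm 1$.
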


The key property needed for the proof of Lemma \ref{lemma:level_component} is that any function in one of the three classes listed in Lemma \ref{lemma:2-parameter_family} has a critical point that is neither a local maximum nor a local minimum.

\medskip
In the arguments that follow, we fix a Morse function $p : S^3 \rightarrow \mathbb{R}$ with exactly two critical points $x_+$ and $x_-$, such that $\Sigma \cap \{x_+, x_-\} = \emptyset$.
Let $\mathrm{Diff}(S^3, x_+, x_-)$ denote the topological group of self-diffeomorphisms of $S^3$ that map $x_+$ to $x_+$ and $x_-$ to $x_-$, and let $\mathrm{Diff}_0(S^3, x_+, x_-)$ denote the connected component of this group that contains the identity map $\mathrm{id}_{S^3}$.
A function $f : \Sigma \rightarrow \mathbb{R}$ is called a \emph{height function} if there exists a diffeomorphism $\iota \in \mathrm{Diff}_0(S^3, x_+, x_-)$ such that $f = p \circ \iota |_{\Sigma}$, where $p$ is the projection defined above.

\begin{lemma}\label{lem: innermost}
If a height function $f : \Sigma \rightarrow \mathbb{R}$ is admissible, then there exists a compressing disk for $\Sigma$ whose boundary is a component of a level set of $f$, with no critical points of $f$ on its boundary.
\end{lemma}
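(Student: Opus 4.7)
My plan is to use the level 2-sphere $S := p^{-1}(c)$ as a witness that the essential level circle of $f$ bounds a disk in $S^3$, and then to refine this disk via standard innermost-disk surgery into a genuine compressing disk whose boundary remains a level component of $f$. By admissibility, there is a non-trivial level component $C \subset f^{-1}(c)$ with no critical points of $f$ on $C$. Since $f$ is regular in a neighborhood of $C$ and regular values of $f$ are dense by Sard's theorem, I may assume after a slight perturbation of $c$ that $c$ is a regular value and that a non-trivial component isotopic to $C$ still appears at level $c$. Because $\iota$ fixes $x_\pm$, the image $\iota(\Sigma)$ avoids both critical points of $p$, so $S := p^{-1}(c)$ is a smooth 2-sphere transverse to $\iota(\Sigma)$, and $S \cap \iota(\Sigma) = \iota(f^{-1}(c))$ is a disjoint union of circles, at least one of which, $\iota(C)$, is essential on $\iota(\Sigma)$.

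Next I will exploit the fact that disjoint circles on the 2-sphere $S$ are tree-nested. Among the components of $S \cap \iota(\Sigma)$ that are essential on $\iota(\Sigma)$, I pick $\gamma$ together with one of the two disks $\Delta \subset S$ bounded by $\gamma$ so that the interior of $\Delta$ contains no other essential component of $S \cap \iota(\Sigma)$; such a ``leaf'' choice is available by finiteness. If the interior of $\Delta$ is entirely disjoint from $\iota(\Sigma)$, then by connectedness $\Delta$ lies in one of the open handlebodies $\iota(V)$ or $\iota(W)$; hence $\iota^{-1}(\Delta)$ is a compressing disk for $\Sigma$, its boundary $\iota^{-1}(\gamma)$ is a component of $f^{-1}(c)$, and regularity of $c$ ensures that no critical point of $f$ lies on this boundary, so we are done.

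If instead the interior of $\Delta$ meets $\iota(\Sigma)$, then by the choice of $\gamma$ every such intersection circle is inessential on $\iota(\Sigma)$, and I will apply standard innermost-disk surgery. Pick an innermost such circle $\gamma'$ on $\Delta$ bounding a subdisk $\Delta' \subset \Delta$ whose interior is disjoint from $\iota(\Sigma)$; since $\gamma'$ is inessential on $\iota(\Sigma)$, it bounds a disk $D_0 \subset \iota(\Sigma)$, and the 2-sphere $\Delta' \cup D_0$ bounds a 3-ball in $S^3$. Replacing $\Delta'$ with a slight push-off of $D_0$ through this 3-ball produces a new disk $\tilde\Delta$ with $\partial \tilde\Delta = \gamma$ and strictly fewer intersection circles with $\iota(\Sigma)$; iterating finitely many times yields a disk $\Delta''$ with $\partial \Delta'' = \gamma$ and interior disjoint from $\iota(\Sigma)$, so $\iota^{-1}(\Delta'')$ is the desired compressing disk. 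I expect the main subtlety to lie in the ``leaf'' selection of $\gamma$---verifying that one side of some essential intersection circle on $S$ is free of other essential components---but this is a purely combinatorial consequence of the tree structure of disjoint circles on a sphere.
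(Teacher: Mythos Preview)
Your proof is correct and follows essentially the same approach as the paper's: pass to a regular level near the admissible one via Sard's theorem, then on the level sphere select a circle that is innermost among the essential ones and argue it bounds a compressing disk. The paper's proof is terser---it simply asserts that the innermost essential circle $C_0$ bounds a compressing disk $D_0$ (``noting that $D_0$ does not necessarily lie within $S$'')---whereas you spell out this last step explicitly via innermost-disk surgery on the inessential intersection circles, which is exactly the standard way to justify that assertion.
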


\begin{proof}
Let $\iota \in \mathrm{Diff}_0(S^3, x_+, x_-)$ be a diffeomorphism such that $f = p \circ \iota |_{\Sigma}$.
Suppose that $f^{-1}(s)$ contains a component $C \subset \Sigma$ that is a non-trivial circle without any critical points, where $s \in \mathbb{R}$.
Then there exists a sufficiently small $\varepsilon > 0$ such that for any $s' \in (s - \varepsilon, s + \varepsilon)$, the preimage $f^{-1}(s')$ contains a non-trivial circle component $C' \subset \Sigma$ near $C$ that also has no critical points and is isotopic to $C$ in $\Sigma$.

By Sard's theorem, we can find a real number $s' \in (s - \varepsilon, s + \varepsilon)$ that is not a critical value.
Then $f^{-1}(s')$ consists of finitely many circle components lying in the level sphere $S := (p \circ \iota)^{-1}(s')$.
Choose a component $C_0$ of $f^{-1}(s')$ that is innermost in $S$ among those that are non-trivial in $\Sigma$.
Then, it can be shown easily that $C_0$ bounds a compressing disk $D_0$, noting that $D_0$ does not necessarily lie within $S$.
\end{proof}

Equip a product Riemannian metric on $S^3 \setminus \{ x_+, x_- \}$ by identifying $S^3 \setminus \{ x_+, x_- \}$ with the product space $S^2 \times \mathbb{R}$ so that each $S^2 \times \{ t \}$ corresponds to a level sphere of the Morse function $p: S^3 \to \mathbb{R}$.

Let $\{ \iota_\beta \in \mathrm{Diff}_0(S^3, x_+, x_-) \}_{\beta \in B}$ be a smooth family of self-diffeomorphisms of $S^3$.
Let $P$ denote the closed subset of $\Sigma \times B$ consisting of the pairs $(x, \beta)$ such that the tangent plane $T_{\iota_\beta(x)} \iota_\beta(\Sigma)$ at $\iota_\beta(x)$ of $\iota_\beta(\Sigma)$ is perpendicular to the tangent plane at $\iota_\beta(x)$ of the level sphere containing $\iota_\beta(x)$.

Consider the continuous family $\{ f_\beta := p \circ \iota_{\beta} |_{\Sigma} : \Sigma \to \mathbb{R} \}_{\beta \in B}$ of height functions.
Note that for $(x, \beta) \in P$, $x$ is not a critical point of the height function $f_\beta$.

Let $(x, \beta) \in (\Sigma \times B) \setminus P$.
For simplicity, we assume that $f_\beta(x) = 0$.
Let $U$ be a sufficiently small open neighborhood of $\iota_\beta(x)$ in $S^3$.
We can identify $U$ with an open neighborhood $U'$ of the origin $\boldsymbol{0}$ in $\mathbb{R}^3 = \{ (u, v, w) \mid u, v, w \in \mathbb{R} \}$ so that $p|_{U'}$ maps $(u, v, w)$ to $w$, and $U \cap \iota_{\beta}(\Sigma)$ is identified with the graph $\{ (u, v, \varphi(u, v)) \} \subset U'$ of a certain smooth function $\varphi$.
With this identification, we can treat $(u, v)$ as a local coordinate for $\Sigma$ centered at $x = (0, 0)$.
Since the graph of any smooth function $\psi$ defined near $(0, 0)$ can be obtained from that of $\varphi$ by an isotopy of the form $\{ (u, v, (1 - t)\varphi(u, v) + t\psi(u, v)) \}_{t \in [0, 1]}$, we see that the set of germs of height functions $(\Sigma, x) \to (\mathbb{R}, 0)$ is exactly the same as the set of germs of all smooth functions $(\Sigma, x) \to (\mathbb{R}, 0)$, which is identified with $(\mathbb{R}^2, (0,0)) \to (\mathbb{R}, 0)$ using the local coordinate $(u, v)$.
Furthermore, the existence of the above isotopy implies that for any small perturbation $\{ g_\beta \}$ near $(x, \beta_0)$ of the family $\{ f_\beta \}$, there exists a small perturbation $\{ \iota'_\beta \in \mathrm{Diff}_0(S^3, x_+, x_-) \}$ of $\{ \iota_\beta \in \mathrm{Diff}_0(S^3, x_+, x_-) \}$ such that $g_\beta = p \circ \iota'_{\beta} |_{\Sigma}$ for $\beta \in U_{\beta_0}$, where $U_{\beta_0}$ is a sufficiently small open neighborhood of $\beta_0$ in $B$.
The support of necessary perturbation of $\{ f_\beta \}_{\beta \in B}$ to obtain a family $\{ g_\beta \}_{\beta \in B}$ of smooth functions as in Lemma \ref{lemma:2-parameter_family} is performed within $(\Sigma \times B) \setminus P$.
Therefore, we have the following lemma, which is a version of Lemma \ref{lemma:2-parameter_family} for height functions.

\begin{lemma}\label{lemma:2-parameter_family of height functions}
Let $\{ f_{\beta} : \Sigma \rightarrow \mathbb{R} \}_{\beta \in B}$ be any continuous family of height functions, where the parameter space $B$ is the unit disk.
Then $\{ f_{\beta} \}_{\beta \in B}$ can be approximated arbitrarily closely by a continuous family of height functions $\{ g_{\beta} \}_{\beta \in B}$, where each $g_{\beta}$ belongs to one of the three classes listed in Lemma \ref{lemma:2-parameter_family}.
\end{lemma}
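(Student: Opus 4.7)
The plan is to deduce the statement from Lemma \ref{lemma:2-parameter_family}, applied in the ambient space of all smooth functions, combined with the local lifting argument developed in the paragraph immediately preceding the lemma. First I would apply Lemma \ref{lemma:2-parameter_family} to $\{f_\beta\}_{\beta \in B}$, regarded simply as a continuous family of smooth functions on $\Sigma$, to obtain an approximating family $\{g_\beta\}_{\beta \in B}$ of smooth functions with each $g_\beta$ in one of the three listed classes.

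The three classes are characterized entirely by the critical-point data of the functions, so by standard cutoff techniques one can arrange that the perturbation $g_\beta - f_\beta$ is supported in an arbitrarily small open neighborhood $\mathcal{U} \subset \Sigma \times B$ of the critical locus $C := \{(x,\beta) \in \Sigma \times B : df_\beta(x) = 0\}$. The key observation recorded in the preceding paragraph is that $C \subset (\Sigma \times B) \setminus P$, because at a point of $P$ the tangent plane to $\iota_\beta(\Sigma)$ is perpendicular to the level spheres and hence the projection $p$ restricts nontrivially to $\iota_\beta(\Sigma)$ there. Consequently we may shrink $\mathcal{U}$ so that $\mathcal{U} \cap P = \emptyset$.

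Next I would lift the function perturbation to a perturbation of the diffeomorphism family. For each $(x_0, \beta_0) \in \mathcal{U}$, the preceding paragraph provides product coordinates $(u, v, w)$ on a neighborhood of $\iota_{\beta_0}(x_0)$ in which $p$ is projection to the $w$-axis and $\iota_\beta(\Sigma)$ appears as the graph $w = \varphi_\beta(u, v)$ depending continuously on $\beta$. Given the desired $g_\beta$, let $\psi_\beta$ be the corresponding graph representing $g_\beta$; then the straight-line interpolation $(1-t)\varphi_\beta + t\psi_\beta$ is realized by an ambient vertical isotopy of $S^3$ supported in a small box, fixing $x_+$ and $x_-$. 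Postcomposing $\iota_\beta$ with its time-one map yields a local family of diffeomorphisms $\{\iota'_\beta\}$ in $\mathrm{Diff}_0(S^3, x_+, x_-)$ with $p \circ \iota'_\beta|_\Sigma = g_\beta$ near $x_0$.

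The main obstacle will be globalizing these local lifts to a single smooth family $\{\iota'_\beta \in \mathrm{Diff}_0(S^3, x_+, x_-)\}_{\beta \in B}$. I would cover a neighborhood of $C$ by finitely many such product charts, using compactness of $\Sigma \times B$, and patch the associated vertical isotopies via a partition of unity subordinate to this cover: on each chart the correction acts purely in the $w$-direction, so the additively combined infinitesimal $w$-translation integrates to an ambient isotopy, and composing $\iota_\beta$ with its time-one map produces the required $\iota'_\beta$. Because every local correction is arbitrarily small, the resulting map is a small perturbation of $\iota_\beta$ and therefore remains a diffeomorphism in the identity component of $\mathrm{Diff}(S^3, x_+, x_-)$; by construction $p \circ \iota'_\beta|_\Sigma = g_\beta$ on $\Sigma$, so $\{g_\beta\}$ is realized as a continuous family of height functions as required.
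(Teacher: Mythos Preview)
Your proposal is correct and follows essentially the same approach as the paper. The paper's argument is contained in the paragraph immediately preceding the lemma: it observes that critical points of $f_\beta$ lie in $(\Sigma\times B)\setminus P$, that near any such point the germs of height functions coincide with the germs of arbitrary smooth functions via the graph description, and that consequently the perturbation required by Lemma~\ref{lemma:2-parameter_family} can be lifted to a perturbation of the diffeomorphism family; you have simply made the globalization step (finite cover plus partition of unity for the vertical isotopies) explicit where the paper leaves it implicit.
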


We now proceed to prove our main theorem.
For convenience, we do not distinguish between a simplicial complex and its underlying space.

\begin{theorem}\label{thm:simply_connected}
The disk complex $\mathcal{D}(\Sigma)$ is simply connected.
\end{theorem}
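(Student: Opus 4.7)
The plan is to upgrade Ivanov's proof of connectedness of the curve complex (Theorem~3.2.B of \cite{Iva02}, Section~1 of \cite{Iva87}) to a proof of simple connectedness, by invoking the $2$-parameter approximation Lemma \ref{lemma:2-parameter_family of height functions} in place of its $1$-parameter counterpart.

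First, given a continuous map $\alpha \colon S^1 \to \mathcal{D}(\Sigma)$, I would apply simplicial approximation to represent $\alpha$ by a cyclic sequence of vertices $D_0, D_1, \ldots, D_n = D_0$ with $D_i \cap D_{i+1} = \emptyset$. For each $D_i$, I would find $\iota_i \in \mathrm{Diff}_0(S^3, x_+, x_-)$ so that $\partial D_i$ is a non-critical level circle of $f_i := p \circ \iota_i|_\Sigma$; for each edge I would interpolate through a path of height functions, using the disjointness of $D_i$ and $D_{i+1}$ to produce an intermediate function realizing both boundaries as non-critical level circles simultaneously. This yields a continuous family $\{f_\beta\}_{\beta \in \partial B}$ together with a tautological family of compressing disks recovering $\alpha$. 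The next step is to extend the family of diffeomorphisms across $B = D^2$ (any obstruction in $\pi_1(\mathrm{Diff}_0(S^3, x_+, x_-))$ being removable by a suitable rechoice of the $\iota_i$, using the freedom to replace a height function by one sharing a prescribed non-critical level circle) and then apply Lemma \ref{lemma:2-parameter_family of height functions} to perturb $\{f_\beta\}_{\beta \in B}$ so that every $f_\beta$ lies in one of the three Cerf classes. Lemmas \ref{lemma:level_component} and \ref{lem: innermost} then produce, for each $\beta \in B$, a compressing disk $D_\beta$ whose boundary is a non-critical level component.

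It remains to choose the disks $D_\beta$ coherently so that $\beta \mapsto [D_\beta]$ defines a continuous simplicial map $B \to \mathcal{D}(\Sigma)$ extending $\alpha$. The three classes in Lemma \ref{lemma:2-parameter_family} give $B$ a natural stratification: a top-dimensional open stratum of generic Morse functions, codimension-$1$ walls where either two critical values coincide or an $A_2$ singularity $x^3 \pm y^2$ occurs, and isolated codimension-$2$ points where three critical values coincide or an $A_3$ singularity $\pm x^4 \pm y^2$ appears. On the open stratum the Reeb graph of $f_\beta$ varies smoothly and an innermost non-trivial level circle can be chosen as a locally constant vertex of $\mathcal{D}(\Sigma)$; across each wall one must check that the two candidate innermost disks are jointly realizable, hence span an edge of $\mathcal{D}(\Sigma)$, and across each codimension-$2$ point the local picture closes up to at most a $2$-simplex.

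I expect this wall-crossing analysis to be the main obstacle: at a handle-slide wall or a birth--death wall the innermost non-trivial level component may suddenly jump between components of a level set, and one must verify that the two candidate disks on either side of the wall are disjoint in $\Sigma$ (or coincide), so that the assignment $\beta \mapsto [D_\beta]$ yields an honest simplicial filling. The Cerf-theoretic control on how level curves break and reconnect at each of the permitted singularity types is precisely what makes this verification tractable, and is the reason for invoking the refined form of the approximation lemma rather than a generic Morse-theoretic perturbation.
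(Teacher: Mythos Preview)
Your overall strategy matches the paper's: represent a loop in $\mathcal{D}(\Sigma)$ by a cyclic sequence of pairwise disjoint compressing disks, realize each as a non-critical level circle of a height function, extend the family of height functions over the disk $B$, perturb via Lemma~\ref{lemma:2-parameter_family of height functions}, and then use Lemmas~\ref{lemma:level_component} and~\ref{lem: innermost} to extract a compressing disk for every parameter. The paper, however, executes the two steps you flag as difficult in a much simpler way, and it is worth knowing how.

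First, the extension over $B$. Rather than worrying about $\pi_1(\mathrm{Diff}_0(S^3,x_+,x_-))$, the paper builds the $S^1$-family so that it extends canonically: each isotopy $(\eta_i)_t$ straightening $D_i$ is chosen with support in a small ball $N_i$, and since $D_i\cap D_{i+1}=\emptyset$ one arranges $N_i\cap N_{i+1}=\emptyset$, so $(\eta_i)_s$ and $(\eta_{i+1})_t$ commute. The $S^1$-family is then a concatenation of commuting isotopies, and one simply rescales the time parameter by the radial coordinate $r$ to fill in $B$. No obstruction theory is needed.

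Second, and more importantly, the paper completely bypasses the wall-crossing analysis you anticipate. The key observation is that \emph{distinct level components of a single function are automatically disjoint}. So instead of stratifying $B$ and tracking how an innermost circle jumps across walls, one argues as follows: for each $\beta\in B$ pick any non-critical level circle $C_\beta$ bounding a compressing disk $D_\beta$; since $C_\beta$ avoids the critical locus, it persists (up to isotopy) as a level circle of $g_{\beta'}$ for all $\beta'$ in some open $U_\beta$. Now take a finite subcover $\{U_\beta\}_{\beta\in B'}$ whose nerve triangulates $B$ (arranged so that on $\partial B$ it restricts to the original $S^1$-triangulation). Whenever $U_\beta\cap U_{\beta'}\neq\emptyset$, choose $\gamma$ in the overlap: both $C_\beta$ and $C_{\beta'}$ are represented by level components of the single function $g_\gamma$, hence are equal or disjoint, so $[D_\beta]$ and $[D_{\beta'}]$ coincide or span an edge in $\mathcal{D}(\Sigma)$. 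The assignment $\beta\mapsto [D_\beta]$ is then automatically simplicial on the nerve, giving the filling $\overline{h}:B\to\mathcal{D}(\Sigma)$ with no case analysis at all. Your stratified approach would also work, but this open-cover/nerve trick is the shortcut you are looking for.
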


\begin{proof}
Let $h : S^1 \rightarrow \mathcal{D}(\Sigma)$ be any simplicial map, where $S^1$ is the unit circle endowed with a triangulation.
Let $\{ \alpha_0, \alpha_1, \ldots, \alpha_{k-1} \}$ be the collection of vertices of $S^1$ in this order.
We regard $S^1 = \{\exp(2 \pi \theta \sqrt{-1}) \mid \theta \in \mathbb{R}\} \subset \mathbb{C}$ and $\alpha_i := \exp \left( \frac{2 \pi i \sqrt{-1}}{k} \right) $.
It is enough to show that $h$ extends to a simplicial map $\overline{h} : B \rightarrow \mathcal{D}(\Sigma)$, where $B = \{ r \alpha \mid 0 \leq r \leq 1, ~ \alpha \in S^1 \}$ is the unit disk endowed with a triangulation such that the set of vertices in $\partial B = S^1$ coincides with the set $\{ \alpha_0, \alpha_1, \ldots, \alpha_{k-1} \}$.

For each $i \in \{ 0, 1, \ldots, k-1 \}$, we choose a compressing disk $D_i$ for $\Sigma$ that represents the vertex $h(\alpha_i)$ of $\mathcal{D}(\Sigma)$.
By deforming $h$ homotopically if necessary, we may assume that any two consecutive disks $D_i$ and $D_{i+1}$ (subscripts $\mathrm{mod} ~ k$) are not isotopic.
We can thus further assume that $D_i$ and $D_{i+1}$ (subscripts $\mathrm{mod} ~ k$) are disjoint.
Denote by $C_i$ the boundary circle of $D_i$.
For each $i$, choose a sufficiently small closed $3$-ball neighborhood $N_i$ of $D_i$ so that $N_i \cap N_{i+1} = \emptyset$.
Then define an isotopy $\{ (\eta_i)_t \in \mathrm{Diff}(S^3, x_+, x_-) \}_{t \in [0, 1]}$ so that
\begin{itemize}
    \item $(\eta_i)_0 := \mathrm{id}_{S^3}$;
    \item outside of $N_i$, $(\eta_i)_t$ is the identity for each $t \in [0, 1]$; and
    \item $(\eta_i)_1 (D_i)$ lies in a level set of $p$.
\end{itemize}

Set $f_{\alpha_i} := p \circ (\eta_i)_1 |_{\Sigma}$.
By construction, $C_i$ is a level circle of $f_{\alpha_i}$.
Note that for any $s, t \in [0, 1]$, we have $(\eta_i)_s \circ  (\eta_{i+1})_{t} = (\eta_{i+1})_{t} \circ  (\eta_i)_s$, since their supports $N_i$ and $N_{i+1}$ are disjoint.
Define an $S^1$-family $\{\iota_\alpha \in \mathrm{Diff}(S^3, x_+, x_-)\}_{\alpha \in S^1}$ of self-diffeomorphisms of $S^3$ by setting, for $\alpha = \exp \left( \frac{2 \pi \theta \sqrt{-1}}{k} \right)$ ($i \leq \theta \leq i+1$),
\[
\iota_\alpha = \begin{cases}
(\eta_{i+1})_{2(\theta - i)} \circ (\eta_i)_1 & (i \leq \theta \leq \frac{2i + 1}{2}), \\
(\eta_{i+1})_1 \circ (\eta_i)_{2 (i+1 - \theta)} & (\frac{2i+1}{2} \leq \theta \leq i+1).
\end{cases}
\]
Then we have $p \circ \iota_{\alpha_i} |_{\Sigma} = p \circ (\eta_i)_1 |_{\Sigma} = f_{\alpha_i}$.
Setting $f_{\alpha} := p \circ \iota_\alpha |_{\Sigma}$ ($\alpha \in S^1$), we obtain an $S^1$-family of height functions on $\Sigma$ such that
\begin{itemize}
    \item for any $\frac{i}{k} \leq \theta \leq \frac{2i+1}{2k},$ $C_i$ is a level circle of $f_{ \exp (2 \pi \theta \sqrt{-1}) }$; and
    \item for any $\frac{2i+1}{2k} \leq \theta \leq \frac{i+1}{k}$, $C_{i+1}$ is a level circle of $f_{\exp( 2 \pi \theta \sqrt{-1}) }$.
\end{itemize}

Now we want to construct a triangulation of the unit disk $B$ such that the triangulated unit circle $S^1$ is a subcomplex of $B$ under this triangulation.
Define a $B$-family $\{\iota_{r \alpha} \in \mathrm{Diff}(S^3, x_+, x_-)\}_{\alpha \in S^1}$ of self-diffeomorphisms of $S^3$ by setting, for $r \alpha = r \exp \left( \frac{2 \pi \theta \sqrt{-1}}{k} \right)$ ($i \leq \theta \leq i+1$),
\[
\iota_{r \alpha} = \begin{cases}
(\eta_{i+1})_{2r(\theta - i)} \circ (\eta_i)_r & (i \leq \theta \leq \frac{2i + 1}{2}), \\
(\eta_{i+1})_r \circ (\eta_i)_{2r (i+1 - \theta)} & (\frac{2i+1}{2} \leq \theta \leq i+1).
\end{cases}
\]
Setting $f_{r \alpha} = p \circ \iota_{r \alpha} |_{\Sigma}$, we have a continuous family of height functions $\{f_{\beta}\}_{\beta = r \alpha \in B}$.

By Lemma \ref{lemma:2-parameter_family of height functions}, the family $\{f_{\beta}\}_{\beta \in B}$ is approximated arbitrarily closely by a continuous family of height functions $\{g_{\beta}\}_{\beta \in B}$ such that each $g_\beta$ belongs to one of the three classes in Lemma \ref{lemma:2-parameter_family}.

For each $\beta \in B$, by Lemma \ref{lemma:level_component}, each $g_{\beta}$ is admissible and thus, by Lemma \ref{lem: innermost}, we can choose a level component $C_{\beta}$ such that $C_{\beta}$ bounds a compressing disk, say $D_{\beta}$, for $\Sigma$ and $C_{\beta}$ contains no critical points of $g_{\beta}$.
In particular, for each $\alpha = \exp \left( \frac{2 \pi \theta \sqrt{-1}}{k} \right) \in S^1$, $\frac{2i-1}{2} \leq \theta \leq \frac{2i+1}{2}$, we can choose such a level component $C_{\alpha}$ of $g_{\alpha}$ to be isotopic to $C_i$ since $g_{\alpha}$ is sufficiently close to $f_{\alpha}$.

If $\beta'$ is sufficiently close to $\beta$, say $\beta'$ belongs to some small open neighborhood $U_{\beta}$ of $\beta$, then the function $g_{\beta'}$ has a level component $C_{\beta'}$ that is isotopic to $C_{\beta}$ since $C_{\beta}$ contains no critical points of $g_{\beta}$.
The circle $C_{\beta'}$ bounds, of course, a compressing disk isotopic to $D_{\beta}$.
The collection $\{U_{\beta}\}_{\beta \in B}$ of such open neighborhoods of every point of $B$ forms an open cover of the disk $B$.
In particular, for the vertices $\{ \alpha_0, \alpha_1, \ldots, \alpha_{k-1} \}$ of $S^1 = \partial B$, we can choose an open neighborhood $U_{\alpha_i}$ that contains $\left\{ \exp \left( \frac{2 \pi \theta \sqrt{-1}}{k} \right) \in S^1 \mid \frac{2i-1}{2} \leq \theta \leq \frac{2i+1}{2} \right\}$, and so the finite subcollection $\{ U_{\alpha_0}, U_{\alpha_1}, \ldots, U_{\alpha_{k-1}} \}$ covers $S^1$.
Since $B$ is compact, we can choose a finite subcover $\{U_{\beta}\}_{\beta \in B'}$ of $\{U_{\beta}\}_{\beta \in B}$.
We can choose such a subcover so that $B' \cap S^1 = \{ \alpha_0, \alpha_1, \ldots, \alpha_{k-1} \}$, and furthermore so that the underlying space of the \emph{nerve} $X$ of this subcover is identified with the disk $B$.
That is, $X$ is a triangulation of $B$ whose vertex set is $B'$.
Clearly, the original triangulated $S^1$ is a subcomplex of the triangulated $B$.

Consider the situation when $U_{\beta} \cap U_{\beta'} \neq \emptyset$.
We have chosen the level components $C_{\beta}$ and $C_{\beta'}$ of $g_{\beta}$ and $g_{\beta'}$, bounding compressing disks $D_{\beta}$ and $D_{\beta'}$ respectively.
Choosing $\gamma \in U_{\beta} \cap U_{\beta'}$, the function $g_{\gamma}$ has level components $C_{\gamma}$ and $C_{\gamma'}$ which are isotopic to $C_{\beta}$ and $C_{\beta'}$ respectively.
Since $C_{\gamma}$ and $C_{\gamma'}$ are level components of the same function $g_{\gamma}$, they are either equal or disjoint from each other.
That implies the two compressing disks $D_{\beta}$ and $D_{\beta'}$ are equal or disjoint from each other, up to isotopy.
In other words, the vertices of $\mathcal{D}(\Sigma)$ represented by $D_{\beta}$ and $D_{\beta'}$ are either the same or joined by an edge.
Now, assigning each vertex $\beta \in B'$ of the triangulation of $B$ to the vertex of $\mathcal{D}(\Sigma)$ represented by the disk $D_{\beta}$ we have chosen, we eventually obtain the desired simplicial map $\overline{h}: B \rightarrow \mathcal{D}(\Sigma)$.
\end{proof}

\noindent
{\bf Acknowledgements}\, The authors extend their heartfelt thanks to Martin Scharlemann for his valuable discussions and insightful feedback. The second-named author wishes to express his deepest gratitude to Masaharu Ishikawa for his invaluable guidance and profound advice on understanding the works of Arnold and Ivanov in the field of singularity theory of real functions.

\bibliographystyle{amsplain}

\end{document}